\setlist[itemize]{leftmargin=2em, itemsep=0.4em, topsep=0.6em, label=\scalebox{0.6}{\ensuremath{\blacksquare}}}
\newcommand{\unit}{1\!\!1}
\newcommand{\mbz}{\mathbb{Z}}
\newcommand{\mbr}{\mathbb{R}}
\newcommand{\mbn}{\mathbb{N}}
\newcommand{\mbm}{\mathbb{M}}
\newcommand{\mbb}{\mathbb{B}}
\newcommand{\mcd}{\mathcal{D}}
\newcommand{\mca}{\mathcal{A}}
\newcommand{\mcg}{\mathcal{G}}
\newcommand{\mcq}{\mathcal{Q}}
\newcommand{\scb}{\mathscr{B}}
\newcommand{\scm}{\mathscr{M}}
\newcommand{\scs}{\mathscr{S}}
\newcommand{\bfg}{\mathbf{g}}
\newcommand{\bfb}{\mathbf{B}}
\newcommand{\bfm}{\mathbf{M}}
\newcommand{\La}{\langle }
\newcommand{\Ra}{\rangle }
\newcommand{\mfs}{\mathfrak{S}}
\newcommand{\Lint}{\mathtt{L}_{\mathtt{int}}}
\theoremstyle{plain}
\newtheorem{theorem}{Theorem}
\newtheorem{lemma}{Lemma}
\newtheorem{proposition}{Proposition}
\newtheorem{corollary}{Corollary}
\theoremstyle{definition}
\theoremstyle{remark}
\newtheorem{remark}[theorem]{Remark}
\titleformat{\section}[block]{\normalfont\large\bfseries}{\thesection.}{0.5em}{}
\titleformat{\subsection}[runin]{\normalfont\bfseries}{\thesubsection.}{0.5em}{}[.]
\title{The Bellman Function for Level Sets of Sparse Operators}
\author{
  Irina Holmes Fay, $\:$
  Zachary H. Pence, $\:$
    John Freeland Small, $\:$
and $\:$
  Xiaokun Zhou
}
\date{} 
\begin{document}

\maketitle

\begin{abstract}
We investigate weak-type $(1, 1)$  boundedness of sparse operators with respect to Lebesgue measure. Specifically, we find the Bellman function maximizing level sets of sparse operators (localized to an interval) and use this to find the exact weak-$(1,1)$ norm of these sparse operators.
\end{abstract}


\vspace{0.3in}

Let $I$ be a real interval -- where by ``interval'' we always mean one of the form $[a,b)$. We work with binary sequences $\alpha$ adapted to $I$, i.e. indexed by the dyadic subintervals of $I$:
$$\alpha = \{\alpha_J\}_{J\in\mcd(I)}; \:\: \alpha_J \in \{0,1\}, \text{ for all } J \in \mcd(I),$$
where the dyadic lattice $\mcd(I)$ is the collection of subintervals of $I$ defined inductively:
    $$
    \mcd(I) := \bigcup_{k=0}^\infty \mcd_k(I), \text{ where } 
    \mcd_0(I):=\{I\} \text{ and } \mcd_{k+1}(I) := \{J_-, J_+ :\: J\in\mcd_k(I)\} \text{ for } k\geq 0,
    $$
where $J_-$ and $J_+$ denote the left and right halves of $J$, respectively. We can think of $\alpha$ as a selection procedure, and denote:
   $$
    \scs_\alpha := \{K\in \mcd(I): \: \alpha_K = 1\},
    $$
the collection of dyadic subintervals of $I$ ``selected'' by $\alpha$.
For every $J\in\mcd(I)$, define:
    $$
    A(\alpha; \: J) := \frac{1}{|J|}\sum_{K\in\mcd(J)}\alpha_K |K|. 
    $$
This is an averaging quantity, measuring ``how much'' of $\alpha$ is packed within $J$, relative to the size of $J$. It is analogous to averaging a function $f$ over $J$, which we denote by:
   $$
    \La f\Ra_J := \frac{1}{|J|} \int_J f(t)\,dt.
    $$

For some $C>1$, we say $\alpha$ is \textbf{$C$-Carleson} if and only if
    $
    \sup_{J\in\mcd(I)} A(\alpha; \: J) \leq C.
    $
Equivalently, the collection $\scs_\alpha$ is  $\sfrac{1}{C}$-sparse (see \cite{LNBook} for details). Let $\mfs_C(I)$ denote the set of all $C$-Carleson binary sequences adapted to $I$. In practical applications, the specific constant $C$ typically does not matter, as one can run equivalent sparse domination arguments with any $C>1$; so, in this paper we work exclusively with $2$-Carleson sequences $\alpha \in \mfs_2(I)$.

\vspace{0.2in}

For  $r>0$ and $\alpha \in \mfs_2(I)$, define the (localized) sparse operator:
$$
\mca_{\alpha, r}f(t):= \left(\sum_{J\in\mcd(I)} \alpha_J \: \langle |f|\rangle_J^r \: \unit_J(t)\right)^{1/r}.
$$
Some notable examples include the classical sparse operator (for $r=1$), used in domination arguments for many linear operators, and the square-sparse operator (for $r=2$), which dominates the dyadic square function.

We will also consider the closely-related power-mean sparse operator 
$$
\mcq_{\alpha, p}f(t) := \sum_{J\in\mcd(I)} \alpha_J \La |f|^p\Ra_J^{\sfrac{1}{p}} \unit_J(t),
$$
for $p\geq 1$ (see Section \ref{sS:power-mean}).

\begin{center}
  {\Large\ding{167}} 
\end{center}

We investigate 
weak-type $(1,1)$ boundedness of $\mca_{\alpha, r}$, with respect to Lebesgue measure. If there is a $C_{\alpha, r}>0$ such that
\begin{equation}
    \label{e:W11}
|\{t\in I: \mca_{\alpha,r}f(t)\geq\lambda\}| \leq \frac{C_{\alpha, r}}{\lambda} \int_I |f(t)|\,dt, \text{ for all } f\in L^1(I) \: \text{ and all } \: \lambda>0,  
\end{equation}
then the optimal constant $C_{\alpha, r}$ above is the operator norm 
    $$ \|\mca_{\alpha, r}\|_{(1,1)} :=  \|\mca_{\alpha, r} : L^1(I) \rightarrow L^{1,\infty}(I)\|.$$
Now, in \eqref{e:W11}, we have $|f|$ on the right-hand side, and the operator $\mca_{\alpha,r}$ also only sees $|f|$ -- so there is no loss of generality if we take $f\geq 0$ in \eqref{e:W11}. We also adjust for working instead with $\mca_{\alpha,r}^r$, and rewrite \eqref{e:W11} as
$$
\|\mca_{\alpha, r}\|_{(1,1)} = \sup_{f, \lambda} \:
\frac{\lambda^{\sfrac{1}{r}}}{\La f\Ra_I} \: \frac{1}{|I|} |\{t\in I: \mca^r_{\alpha,r}f(t)\geq\lambda\}|,
$$
where supremum is over all $f\geq 0$ on $I$ with $0<\La f\Ra_I <\infty$, and all $\lambda>0$. If we take supremum over all sequences $\alpha\in\mfs_2(I)$ on both sides of the previous equality, and index the functions $f$ by $x:=\La f\Ra_I \in (0,\infty)$, and the $\alpha$'s by $A:=A(\alpha; \: I) \in [0,2]$, we obtain:
    \begin{equation}
        \label{e:W11-withB}
        \sup_{\alpha \in \mfs_2(I)} \|\mca_{\alpha, r}\|_{(1,1)} = \sup_{\substack{x>0, \: \lambda>0;\\ A\in [0,2]}} 
        \: \frac{\lambda^{\sfrac{1}{r}}}{x} \mbb_r(x,A,\lambda),
    \end{equation}
where for every $x\geq 0$, $A\in [0,2]$, and $\lambda\in\mbr$, we define the \textbf{Bellman function}:
\begin{equation} \label{E:B-def1}
\mbb_r (x,A,\lambda) := \sup_{f, \alpha} \frac{1}{|I|} |\{t\in I: \mca_{\alpha,r}^r f(t) \geq\lambda\}|,
\end{equation}
where supremum is over all:
\begin{itemize}
\item non-negative functions $f\geq 0$ in $L^1(I)$ with fixed average 
    $    \langle f \rangle_I = x;   $
\item sequences $\alpha\in\mfs_2(I)$ with 
    $    A(\alpha; I) = A.    $
\end{itemize}
\noindent In this paper we find $\mbb_r$, and the quantity in \eqref{e:W11-withB}.
As shown in Section \ref{sS:power-mean}, this immediately gives us weak-type $(p,p)$ norms for the power-mean sparse operators $\mcq_{\alpha,p}$.

\vspace{0.2in}
\begin{minipage}[c]{0.5\linewidth}
    \begin{proposition}
        \label{prop:bound}
    For $r>0$,
    $$
         \sup_{\alpha \in \mfs_2(I)} \|\mca_{\alpha, r}\|_{(1,1)} = C(r) := \left(
    \frac{2^{r+1}-1}{2^r-1}
    \right)^{\sfrac{1}{r}}.
    $$
    \end{proposition}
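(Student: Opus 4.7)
The plan is to use identity \eqref{e:W11-withB}, which reduces the claim to computing the supremum $\sup_{A \in [0, 2],\, y > 0} y^{1/r}\Phi(A, y)$, where $\Phi(A, y) := \mbb_r(1, A, y)/|I|$ and $y := \lambda/x^r$. The scaling reduction uses $\mbb_r(cx, A, c^r\lambda) = \mbb_r(x, A, \lambda)$, obtained by replacing $f$ with $cf$ in \eqref{E:B-def1} and using the homogeneity of $\mca_{\alpha, r}^r$.

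For the lower bound $\geq C(r)$, I would construct an explicit near-extremizer of ``chain with twin leaves'' type. Let $K_k \subset I$ be the $k$-fold iterated left-half (so $K_0 = I$ and $|K_k| = 2^{-k}|I|$). For $n \geq 1$, set $\alpha_{K_k} = 1$ for $k = 0, \ldots, n$, $\alpha_{(K_n)_-} = \alpha_{(K_n)_+} = 1$, and $\alpha_J = 0$ otherwise. A routine count gives $A(\alpha; J) \leq 2$ for every $J \in \mcd(I)$, with equality at $J = I$; in particular $\alpha \in \mfs_2(I)$. Taking $f := x \cdot 2^n \unit_{K_n}$, for any $t \in K_n$ the selected intervals containing $t$ are $K_0, \ldots, K_n$ together with one of $(K_n)_\pm$, which yields
$$
\mca_{\alpha, r}^r f(t) = x^r\sum_{k=0}^n 2^{kr} + (x\cdot 2^n)^r = x^r\left[\frac{2^{(n+1)r} - 1}{2^r-1} + 2^{nr}\right] =: \lambda_n.
$$
Setting $\lambda := \lambda_n$, the level set is exactly $K_n$ (of measure $2^{-n}|I|$), and since $\lambda_n/(x\cdot 2^n)^r \to (2^{r+1}-1)/(2^r-1) = C(r)^r$ as $n \to \infty$, the ratio $(\lambda_n^{1/r}/x)\cdot (|K_n|/|I|)$ tends to $C(r)$.

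For the matching upper bound $\leq C(r)$, I would appeal to the explicit Bellman function $\mbb_r$ established elsewhere in the paper. Its piecewise formula (expected to satisfy $\Phi \equiv 1$ on a low-$y$ region and to decay like a constant over $y^{1/r}$ in the tail, with the leading constant monotone in $A$ and maximized at $A = 2$) immediately gives $y^{1/r}\Phi(A, y) \leq C(r)$, with equality attained at the threshold $y = C(r)^r$, $A = 2$. Monotonicity of $\mbb_r$ in $A$ follows because increasing $A$ only enlarges the admissible class of $2$-Carleson sequences over which the supremum in \eqref{E:B-def1} is taken.

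The main obstacle is really this upper bound: it rests on constructing and verifying the explicit Bellman majorant $B(A, y)$ satisfying the dyadic self-similarity inequality arising from the split $I = I_- \cup I_+$, together with the boundary conditions at $y \leq 0$ and $x = 0$. Once that majorant is in hand --- the main technical content of the paper --- the proposition follows by direct substitution into \eqref{e:W11-withB}, and the ``chain with twin leaves'' construction above saturates the bound, confirming that the supremum equals $C(r)$.
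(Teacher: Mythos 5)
Your lower bound construction is correct and is actually a useful addition: the paper, having already computed $\mbm_r$, never exhibits an explicit near-extremizing pair $(f,\alpha)$, so the ``chain with twin leaves'' example (nested left halves $K_0\supset\cdots\supset K_n$, all selected, plus both halves of $K_n$, with $f=x\,2^n\unit_{K_n}$) gives an independent check that $\sup\geq C(r)$. One small imprecision: $A(\alpha;K_j)=2$ for every $j=0,\ldots,n$, not only at $J=I$, though of course this is still fine for $2$-Carleson.

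Your upper bound sketch has two concrete errors, both of which matter. First, the supremum of $\frac{\lambda^{1/r}}{x}\mbb_r(x,A,\lambda)$ is \emph{not} attained at $y=\lambda/x^r=C(r)^r$, $A=2$. In $\omega$-coordinates that point is $\omega=C(r)^{-1}$, which lies strictly below $\omega_0=2^{-1/r}$ (since $C(r)>2^{1/r}$), hence inside the piecewise-linear region $\Delta$; one checks that at $\omega=\omega_n(r)$ the ratio $\mbm_r(\omega,2)/\omega$ equals $\bigl(\tfrac{2^{r+1}-1-2^{-nr}}{2^r-1}\bigr)^{1/r}<C(r)$ for every $n$, so the supremum is only approached as $\omega\to 0_+$ (equivalently $y\to\infty$). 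Second, your justification of monotonicity of $\mbb_r$ in $A$ is wrong: the admissible class in \eqref{E:B-def1} requires $A(\alpha;I)=A$ exactly, not $A(\alpha;I)\leq A$, so increasing $A$ gives a \emph{different} class, not a larger one; monotonicity is not a definitional triviality. Fortunately that claim is also unneeded. The paper closes the upper bound cleanly by combining Lemma~\ref{L:concave} (for a concave $f\geq 0$, $\sup_{\omega>0}f(\omega)/\omega=\lim_{\omega\to 0_+}f(\omega)/\omega$) with a smooth concave envelope $\Phi_r(\omega,A)=A\omega\bigl(\tfrac{2^{r+1}-1}{2^r\omega^r+(2^r-1)A^r}\bigr)^{1/r}$ that dominates $\mbm_r$ yet agrees with it along the seams $[OF_n^r]$, giving $\lim_{\omega\to 0_+}\mbm_r(\omega,A)/\omega=\lim_{\omega\to 0_+}\Phi_r(\omega,A)/\omega=C(r)$ for \emph{every} fixed $A$. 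Your ``the piecewise formula immediately gives'' skips exactly this computation; without the envelope (or an equivalent calculation of the limiting slope at $\omega=0$ from the explicit $\omega_n(r)$), the $\leq C(r)$ direction is not actually established.
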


    \begin{corollary}
        \label{cor:power-mean}
        For $p\geq 1$:
        $$
        \sup_{\alpha \in \mfs_2(I)} \|\mcq_{\alpha, p}\|_{(p,p)} = C(\sfrac{1}{p})^{\sfrac{1}{p}} = 
        \frac{2 \cdot 2^{\sfrac{1}{p}}-1}{2^{\sfrac{1}{p}}-1}.
        $$
    \end{corollary}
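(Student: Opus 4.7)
The plan is to reduce Corollary \ref{cor:power-mean} directly to Proposition \ref{prop:bound} via the substitution $g = |f|^p$ and the exponent identification $r = \sfrac{1}{p}$.

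The key step is to establish the pointwise identity
$$\bigl(\mcq_{\alpha,p}f(t)\bigr)^p \;=\; \mca_{\alpha,\sfrac{1}{p}}(|f|^p)(t), \qquad t\in I.$$
Setting $g := |f|^p$, we have $\La |f|^p \Ra_J^{\sfrac{1}{p}} = \La g \Ra_J^{\sfrac{1}{p}}$, so the definition of $\mca_{\alpha, r}$ with $r = \sfrac{1}{p}$ gives
$$\mca_{\alpha, \sfrac{1}{p}}g(t) \;=\; \Bigl(\sum_{J\in\mcd(I)} \alpha_J \La g\Ra_J^{\sfrac{1}{p}} \unit_J(t)\Bigr)^{p} \;=\; \bigl(\mcq_{\alpha, p}f(t)\bigr)^p,$$
which is the claim.

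From this identity we get the level-set equality $\{\mcq_{\alpha, p}f \geq \lambda\} = \{\mca_{\alpha,\sfrac{1}{p}}(|f|^p) \geq \lambda^p\}$ for every $\lambda > 0$. Combined with $\|{|f|^p}\|_{L^1} = \|f\|_{L^p}^p$ and the fact that the map $f \mapsto |f|^p$ is a bijection between non-negative functions in $L^p(I)$ and non-negative functions in $L^1(I)$, a change of variables $(\mu, g) = (\lambda^p, |f|^p)$ in the supremum defining each weak-type norm yields
$$\|\mcq_{\alpha, p}\|_{(p,p)} \;=\; \|\mca_{\alpha, \sfrac{1}{p}}\|_{(1,1)}^{\sfrac{1}{p}}.$$

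Taking the supremum over $\alpha \in \mfs_2(I)$ on both sides and applying Proposition \ref{prop:bound} with $r = \sfrac{1}{p}$, we obtain
$$\sup_\alpha \|\mcq_{\alpha, p}\|_{(p,p)} \;=\; C(\sfrac{1}{p})^{\sfrac{1}{p}} \;=\; \frac{2 \cdot 2^{\sfrac{1}{p}} - 1}{2^{\sfrac{1}{p}} - 1},$$
where the last equality is immediate from the definition $C(r) = \bigl(\frac{2^{r+1}-1}{2^r - 1}\bigr)^{\sfrac{1}{r}}$ applied at $r = \sfrac{1}{p}$. Since the whole argument rests on a bookkeeping identity once Proposition \ref{prop:bound} is in hand, I do not anticipate a substantive obstacle; the only care required is in matching the exponents $r = \sfrac{1}{p}$ and verifying that the supremum is preserved under the substitution $f \mapsto |f|^p$.
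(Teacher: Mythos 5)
Your proof is correct and follows essentially the same route as the paper: both rest on the pointwise identity $\mcq_{\alpha,p}f = \bigl(\mca_{\alpha,\sfrac{1}{p}}(f^p)\bigr)^{\sfrac{1}{p}}$ together with the substitution $g=f^p$, $\mu=\lambda^p$, to transfer the weak-$(p,p)$ question for $\mcq_{\alpha,p}$ to the weak-$(1,1)$ question for $\mca_{\alpha,\sfrac{1}{p}}$ and invoke Proposition~\ref{prop:bound}. The only cosmetic difference is that you first record the per-$\alpha$ equality $\|\mcq_{\alpha,p}\|_{(p,p)} = \|\mca_{\alpha,\sfrac{1}{p}}\|_{(1,1)}^{\sfrac{1}{p}}$ before taking the supremum, whereas the paper passes directly through the Bellman function $\mbb_{\sfrac{1}{p}}$ and $\mbm_{\sfrac{1}{p}}$.
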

\end{minipage}
\begin{minipage}[c]{0.5\linewidth}
\begin{center}
\includegraphics[width=0.85\linewidth]{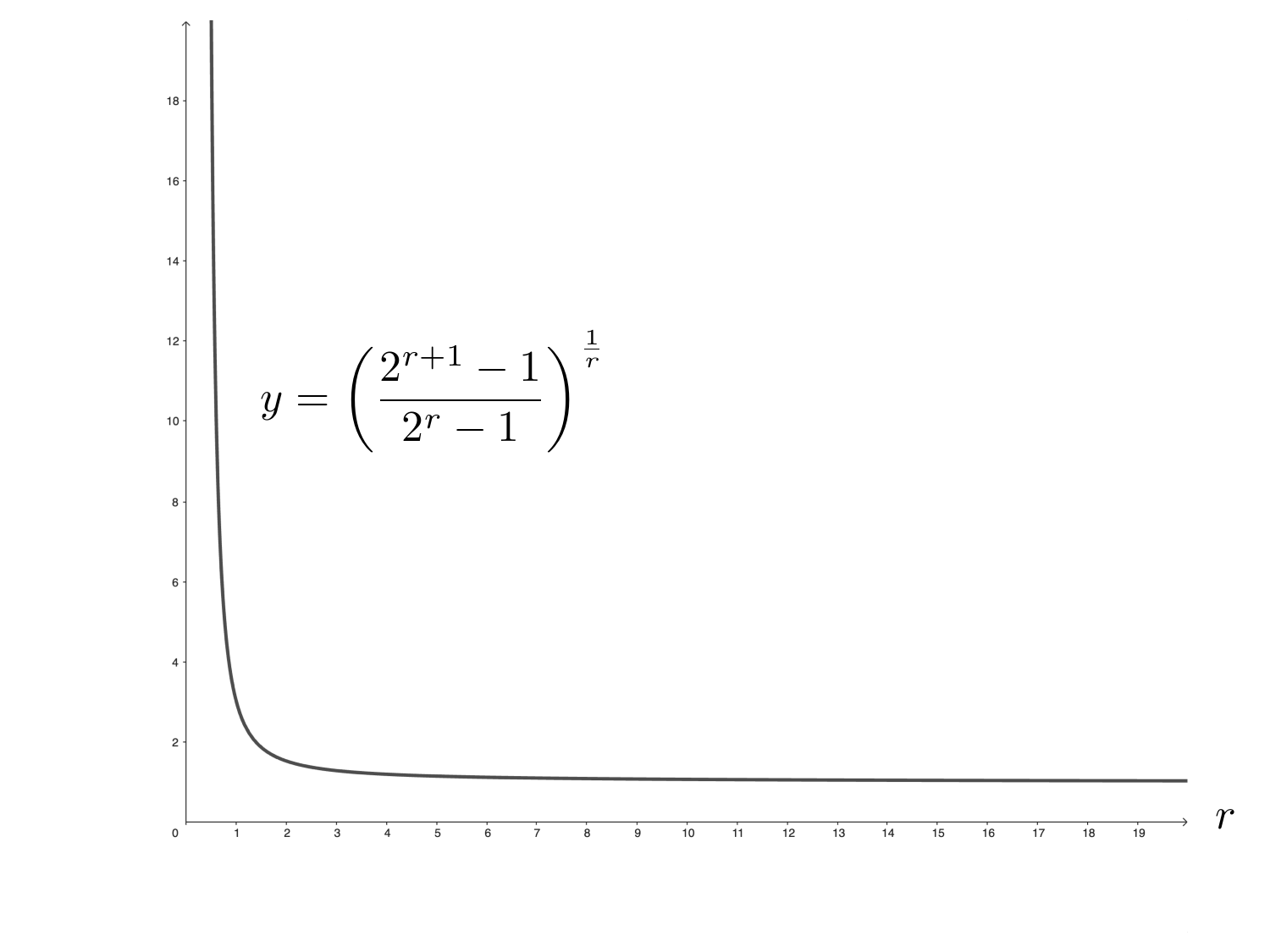}
\end{center}
\end{minipage}

\vspace{0.2in}

Our approach is inspired by \cite{HRS1}, where the Bellman function for the \textit{restricted} weak-type estimate is found. In that case, one works with sparse operators acting on indicator functions, and the Bellman function has three variables. Since we are looking at sparse operators acting on non-negative functions, we have an extra homogeneity which will reduce the problem to two variables.
A strong-type estimate for the case $r=2$ was found in \cite{MR2500520}. In \cite{MR3812865} and \cite{Reznikov2019}, similar weak-type estimates are investigated for martingale transforms.


\section{Main Results}

Let $r>0$. We describe the Bellman function defined in \eqref{E:B-def1}
in terms of its values at $\lambda=1$. We shall see that $\mbb_r$ satisfies a homogeneity relationship which allows us to write
$$
\mbb_r (x,A,\lambda)= \mbm_r \big(x \lambda^{-\sfrac{1}{r}}, \:\: A\big),
$$
where the function 
$$\mbm_r(\omega, A):=\mbb_r(\omega, A, 1)
: \Omega_\mbm\rightarrow[0,1], \text{ defined on } \Omega_\mbm:=[0,\infty) \times [0,2],
$$ 
is graphed below in Figure \ref{fig:Mgraphr}.

\begin{figure}[h!]
\begin{center}
\includegraphics[width=0.7\linewidth]{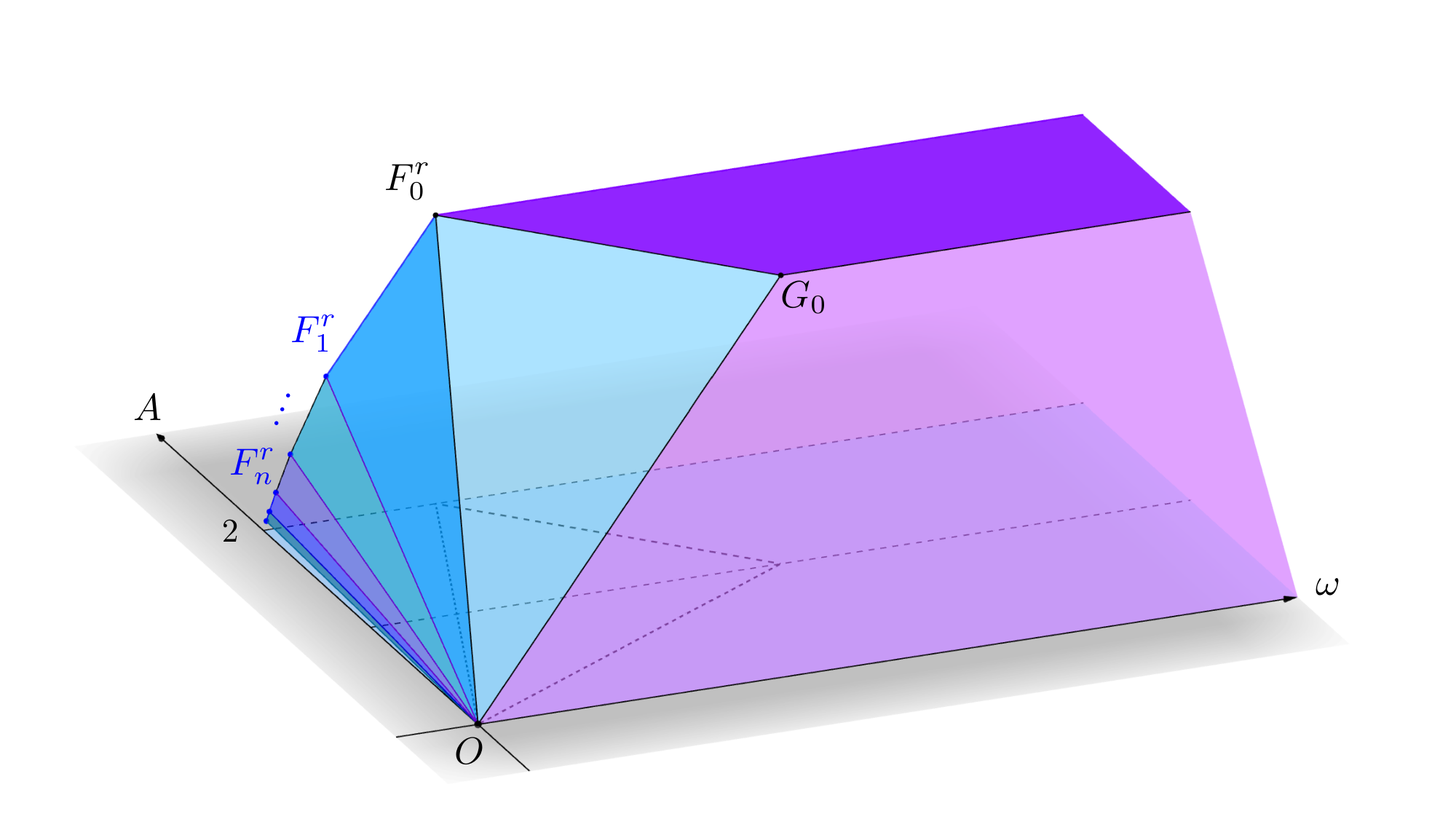}
\end{center}
\caption{The surface $z = \mbm_r(\omega, A)$ (shown here with $r=0.8$).}
\label{fig:Mgraphr}
\end{figure}

The most important critical points on this graph are 
    $
    O:=(0,0,0), \: G_0 := (1,1,1),
    $
and the sequence of points $\{F_n^r\}_{n\geq 0}$ given by
    $$
    F_n^r := \left(\omega_n(r), \: 2, \: \frac{1}{2^n}\right) \: \text{ for all } \: n\geq 0,
    $$
where $\{\omega_n(r)\}_{n\geq 0}$ is the decreasing sequence
defined by
    \begin{equation}
        \label{e:def-omegar-exp}
        \omega_n(r) = \left( \frac{2^r-1}{2^{nr} (2^{r+1}-1)-1} \right)^{\sfrac{1}{r}}, \:\: \text{ for all } n \geq 0.
    \end{equation}
The surface $z=\mbm_r(\omega, A)$ is piecewise linear, and consists of:
    \begin{itemize}
        \item Two infinite trapezoids: one in the horizontal plane $z=1$, bounded by the line segment $[G_0 F_0^r]$ and the rays parallel to the $\omega$-axis originating at $G_0$ and $F_0^r$, and the other in the $z=A$ plane, bounded by $[O G_0]$, the $\omega$-axis, and the parallel ray originating at $G_0$.
        \item The triangular surface bounded by $\Delta [O G_0 F_0^r]$.
        \item The sequence of triangular surfaces bounded by 
            $
            \left\{\Delta [O F^r_{n-1} F_n^r]\right\}_{n\geq 1}.
            $
    \end{itemize}

\noindent To be precise,  
\begin{equation}
\label{e:M-def-r}
	\mbm_r(\omega,A):=\begin{cases}
        1, & \text{ if } (\omega, A) \in \Sigma_0(r)\\[2pt]
        A, & \text{ if } (\omega, A) \in \Sigma_1(r) \\[5pt]
        \frac{\omega+\left(1-\omega_0(r)\right)A}{2-\omega_0(r)}, & \text{ if } (\omega, A) \in \Delta_0(r) \\[5pt]
	\frac{A}{2} \cdot \mathfrak{f}_r\left(\frac{2\omega}{A}\right), & \text{ if } (\omega, A) \in \Delta(r),
	\end{cases}
\end{equation}
where the regions partitioning the domain $\Omega_\mbm$ are described below in Figure \ref{fig:Mregions1}, and, for $0<\omega<\omega_0(r)$, 
the $A=2$ boundary function $\mathfrak{f}_r(\omega)=\mbm_r(\omega, 2)$ is defined by linear interpolation between the points
    $
    \left(\omega_n(r),\: 2^{-n}\right):
    $
    \begin{equation}
        \label{e:f-def-r}
    \mathfrak{f}_r(\omega) := \begin{cases}
        0, & \:\text{ if }\: \omega=0\\
        \Lint[\: (\omega_n(r), 2^{-n}); \: n\in \mbz_{\geq 0}\:], & \:\text{ if }\: 0<\omega<\omega_0(r).
    \end{cases}
    \end{equation}

\begin{figure}[h]
\begin{center}
\includegraphics[width=0.9\linewidth]{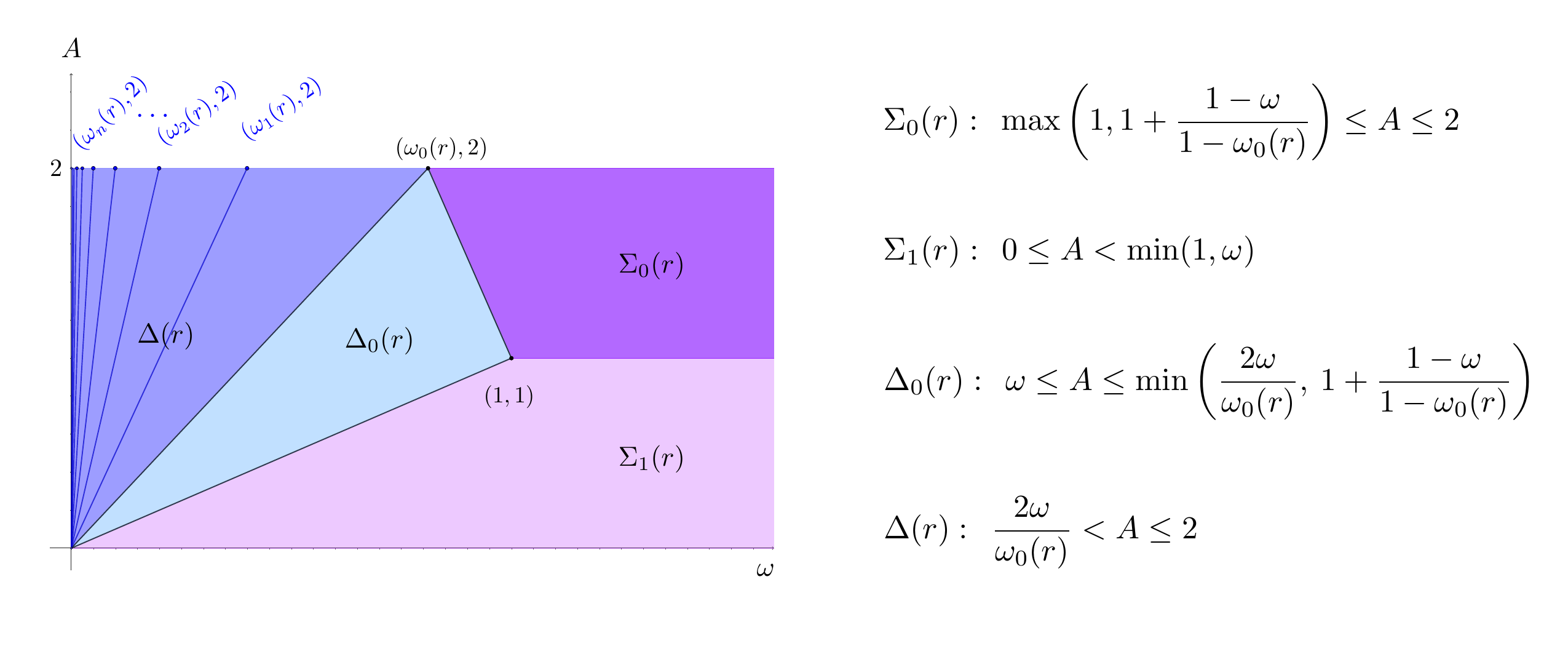}
\end{center}
\caption{Regions partitioning the $\Omega_\mbm$-domain.}
\label{fig:Mregions1}
\end{figure}

\begin{theorem}
        \label{t:ExactBellman}
    The Bellman function defined in \eqref{E:B-def1} is given by
        $$
        \mbb_r(x,A,\lambda)=\mbm_r\left(x \lambda^{-\sfrac{1}{r}},\: A\right).
        $$
\end{theorem}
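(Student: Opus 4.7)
The plan is to reduce the three-variable problem to two variables by homogeneity, derive a dyadic ``main inequality'' satisfied by $\mbb_r$, verify that the candidate $\mbm_r$ satisfies the same inequality, and then construct optimizers to close the equality. Since $\mca_{\alpha,r}^r(cf) = c^r\, \mca_{\alpha,r}^r(f)$, substituting $f \mapsto \lambda^{-1/r} f$ gives $\mbb_r(x, A, \lambda) = \mbb_r(x\lambda^{-1/r}, A, 1)$ for $\lambda > 0$, reducing the problem to $\mbb_r(\omega, A, 1) = \mbm_r(\omega, A)$. For the dyadic splitting, write $I = I_- \cup I_+$: any admissible $(f,\alpha)$ decomposes via the root bit $\alpha_I \in \{0,1\}$ and the restrictions to the halves, with parameters $x_\pm = \La f\Ra_{I_\pm}$, $A_\pm = A(\alpha; I_\pm)$ satisfying $x_- + x_+ = 2x$, $A_- + A_+ = 2(A - \alpha_I)$, $A_\pm \in [0,2]$. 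Since $\mca_{\alpha,r}^r f = \alpha_I x^r + \mca_{\alpha|_J, r}^r(f|_J)$ on each half $J \in \{I_-, I_+\}$, taking suprema over the restricted pairs yields the \emph{main inequality}
\[
\mbb_r(x, A, \lambda) \;\geq\; \tfrac{1}{2}\bigl(\mbb_r(x_-, A_-, \lambda - \alpha_I x^r) + \mbb_r(x_+, A_+, \lambda - \alpha_I x^r)\bigr),
\]
valid for every such splitting, with the convention $\mbb_r(\cdot, \cdot, \mu) = 1$ when $\mu \leq 0$.

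For the upper bound $\mbb_r \leq \mbm_r(x\lambda^{-1/r}, A)$, verify that the candidate $\tilde{\mbb}_r(x, A, \lambda) := \mbm_r(x\lambda^{-1/r}, A)$ (extended by $1$ for $\lambda \leq 0$) satisfies the main inequality. This is a piecewise check across the four pieces $\Sigma_0(r), \Sigma_1(r), \Delta_0(r), \Delta(r)$ in \eqref{e:M-def-r}, with cases $\alpha_I \in \{0,1\}$ and subcases according to which piece contains each rescaled child $(x_\pm \tilde\lambda^{-1/r}, A_\pm)$, where $\tilde\lambda := \lambda - \alpha_I x^r$. Then truncate: replacing $\mca_{\alpha, r}^r$ by its top-$N$ dyadic truncation yields a Bellman function $\mbb_r^{(N)}$ satisfying $\mbb_r^{(-1)}(x, A, \lambda) = \unit\{\lambda \leq 0\}$ and the same recursion. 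Induction on $N$, using the main inequality for $\tilde{\mbb}_r$, gives $\mbb_r^{(N)} \leq \tilde{\mbb}_r$, and $\mbb_r^{(N)} \nearrow \mbb_r$ as $N \to \infty$ (since the truncated operator converges to $\mca_{\alpha, r}^r$ and the level sets increase), concluding the upper bound.

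For the lower bound, exhibit admissible $(f, \alpha)$ attaining $\mbm_r(\omega, A)$ at each $(\omega, A)$. At $O = (0,0)$ take $f \equiv 0$; at $G_0 = (1,1)$ take $f \equiv 1$, $\alpha_I = 1$, $\alpha \equiv 0$ below, so $\mca_{\alpha,r}^r f \equiv 1$ on $I$. At $F_n^r = (\omega_n(r), 2)$, construct the optimizer recursively: set $\alpha_I = 1$, $A_+ = 2$, $A_- = 0$, and concentrate all mass on one half via $x_+ = 2\omega_n(r)$, $x_- = 0$. The identity $\omega_{n-1}(r)^r(1 - \omega_n(r)^r) = 2^r\, \omega_n(r)^r$ — equivalent to the recursion $\omega_n^r = \omega_{n-1}^r / (2^r + \omega_{n-1}^r)$ implicit in \eqref{e:def-omegar-exp} — yields $x_+(1-\omega_n(r)^r)^{-1/r} = \omega_{n-1}(r)$, so the rescaled half $I_+$ lands at $F_{n-1}^r$ and contributes fraction $2^{-(n-1)}$, while $I_-$ contributes $0$ because $\alpha|_{I_-} \equiv 0$ makes $\mca_{\alpha|_{I_-}, r}^r \equiv 0 < 1 - \omega_n(r)^r$. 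The resulting level-set fraction is $\tfrac{1}{2} \cdot 2^{-(n-1)} = 2^{-n}$. Interior points of each region in \eqref{e:M-def-r} are realized by one-step splittings that convex-combine these corner optimizers along the affine formulas.

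The main obstacle is the verification of the main inequality on the fractal region $\Delta(r)$, where $\mbm_r(\omega, A) = (A/2)\,\mathfrak{f}_r(2\omega/A)$ is built from the piecewise-linear interpolant through the countable sequence $\{(\omega_n(r), 2^{-n})\}_{n \geq 0}$. The inequality must hold across every kink of $\mathfrak{f}_r$ and across the transitions into the neighboring regions $\Sigma_0(r)$, $\Delta_0(r)$, requiring a case analysis over which linear piece each child parameter lies in and a delicate algebraic identity linking consecutive $\omega_n(r)$ values — an identity that forces precisely \eqref{e:def-omegar-exp} and dovetails with the recursive $F_n^r$-optimizers, so that the upper and lower bounds meet exactly.
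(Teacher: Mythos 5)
Your overall plan is sound and correctly identifies the two halves of the argument. The upper bound half — verifying that the explicit candidate $\tilde{\mbb}_r(x,A,\lambda) := \mbm_r(x\lambda^{-1/r},A)$ (extended by $1$ for $\lambda\leq 0$) satisfies the main inequality and then propagating that inequality down a truncated dyadic tree — is essentially the paper's proof: the truncation-and-induction argument \emph{is} the proof of the Least Supersolution property (Theorem~\ref{T:LS}), and the ``piecewise check'' you gesture at is Proposition~\ref{P:Jump1} plus the easy observation that the pointwise infimum of affine functions is concave. Your recursive construction at the corners $F_n^r$ is correct, and the identity $2^r\omega_n^r = \omega_{n-1}^r(1-\omega_n^r)$ that you invoke is exactly the recursion $\omega_n = \varphi_r(\omega_{n-1}/2)$ defining the sequence.

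Where you genuinely diverge from the paper is in the lower bound $\mbb_r\geq\mbm_r$. You go the classical route of exhibiting near-extremizers: concrete admissible pairs $(f,\alpha)$ attaining the claimed value at each corner $O$, $G_0$, $F_n^r$, with interior points then filled in by convex-combining these via dyadic splits. The paper never constructs optimizers. Instead it argues that \emph{any} supersolution $M\in\scm$ is forced, by the obstacle together with concavity and the jump inequality, to lie above $\bfm$; since $\mbb_r$ itself belongs to $\scb$ (Theorem~\ref{T:Bellman-Props}), this yields $\bfm\leq\mbm_r$ abstractly, with no need to produce an extremal $(f,\alpha)$. Your route makes the extremal geometry more transparent (one can literally see what saturates the bound), whereas the paper's route is shorter and sidesteps the bookkeeping of the $2$-Carleson constraint through each recursive split. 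Both are valid. The one place you have under-specified is the claim that ``interior points of each region are realized by one-step splittings'': a single dyadic split only realizes weight-$\tfrac{1}{2}$ combinations, so to attain an arbitrary point of an affine piece — in particular an arbitrary point of a triangle $\Delta[O\,F_{n-1}^r\,F_n^r]$ — you need an iterated splitting (dyadic approximation of the barycentric weights) or an $\epsilon$-argument. That is a standard Bellman maneuver, but it is real work that your outline elides.
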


\noindent We prove this in the next sections. In Section \ref{S:Bellman}, we lay out the standard Bellman function arguments, culminating with the ``Least Supersolution'' property of $\mbb_r$ -- which allows us to transition to an optimization problem. As explained in Section \ref{subS:supersolutions}, $\mbb_r$ can also be understood as the pointwise minimizer of a certain class of functions $\scb$ (which contains $\mbb_r$!). We then set out to construct such a minimizer $\bfb$ of the class $\scb$, in Section \ref{S:Construction}. 

Now, this constructive approach always runs the risk that we could have ``done better:'' as a trivial example, the $0$ function minimizes the collection $\scb$ but it is certainly not the best minimizer. So far we have $\bfb\leq\mbb_r$, which only makes $\bfb$ a \textit{candidate} for the true Bellman function. To prove that $\bfb$ is indeed $\mbb_r$, we show in Section \ref{S:ProofTrue} that $\bfb$ belongs to the collection $\scb$.

\subsection{Weak-type bounds} 
Now that we have a full description of $\mbm_r$, we return to \eqref{e:W11-withB} and questions about $\mca_{\alpha, r}$. We evaluate the quantity on the right hand side below:
    $$
    \sup_{\alpha \in \mfs_2(I)} \|\mca_{\alpha, r}\|_{(1,1)} = \sup_{A\in [0,2]} \:\sup_{\omega>0} \: \frac{\mbm_r(\omega, A)}{\omega}.
    $$
Recall first a simple result below.

\begin{lemma}
    \label{L:concave}
    If $f: [0,\infty) \rightarrow [0,\infty)$ is concave, then the function $f(x)/x$ (for $x>0$) is non-increasing. Consequently,
    $$
    \sup_{x>0} \: \frac{f(x)}{x} = \lim_{x\rightarrow 0+} \: \frac{f(x)}{x}.
    $$
\end{lemma}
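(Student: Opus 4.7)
The plan is to prove monotonicity of $f(x)/x$ directly from the definition of concavity, exploiting the fact that $f(0)\geq 0$. Given $0<x_1<x_2$, I would write $x_1$ as a convex combination of $0$ and $x_2$, namely
$$
x_1 = \tfrac{x_1}{x_2}\cdot x_2 + \bigl(1-\tfrac{x_1}{x_2}\bigr)\cdot 0.
$$
Concavity of $f$ then gives
$$
f(x_1) \geq \tfrac{x_1}{x_2}\, f(x_2) + \bigl(1-\tfrac{x_1}{x_2}\bigr)\, f(0) \geq \tfrac{x_1}{x_2}\, f(x_2),
$$
where the second inequality uses the hypothesis $f(0)\geq 0$ (which comes from $f$ mapping into $[0,\infty)$). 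Dividing by $x_1>0$ yields $f(x_1)/x_1 \geq f(x_2)/x_2$, proving that $x\mapsto f(x)/x$ is non-increasing on $(0,\infty)$.

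For the second statement, once we know $f(x)/x$ is non-increasing on $(0,\infty)$, the supremum over $x>0$ equals the limit as $x\to 0^+$ by a standard monotone-limit argument: the one-sided limit $\lim_{x\to 0^+} f(x)/x$ exists in $[0,\infty]$ as the monotone limit of a non-increasing function, and by monotonicity it dominates $f(x)/x$ for every $x>0$, so it equals the supremum.

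There is no real obstacle here — the only subtle point is remembering to use $f(0)\geq 0$, which is where the codomain assumption $[0,\infty)$ enters; without it, concavity alone would not suffice (consider $f(x)=x-1$, which is concave but with $f(x)/x$ increasing). Everything else is routine.
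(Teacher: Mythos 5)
Your proof is correct and follows exactly the paper's argument: write the smaller point as a convex combination of $0$ and the larger point, apply concavity, and drop the $f(0)\geq 0$ term. The added remark on why the codomain assumption is needed (with the $f(x)=x-1$ counterexample) is a nice touch but does not change the approach.
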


\begin{proof}
    Let $0<x<y$. Since $f$ is concave and $f(0)\geq 0$, we let $\theta:=\frac{x}{y} \in (0,1)$ below:
        $$
       f(x) = f\big( (1-\theta)\cdot 0 + \theta \cdot y\big) \geq \theta f(y) = \frac{x}{y} f(y),
        $$
    which proves the statement.
\end{proof}

As discussed in Section \ref{S:ProofTrue}, the function $\mbm_r$ is concave. However, it is not smooth -- so this is perhaps handled quicker as follows.


\begin{figure}[h]
\begin{center}
\includegraphics[scale=0.4]{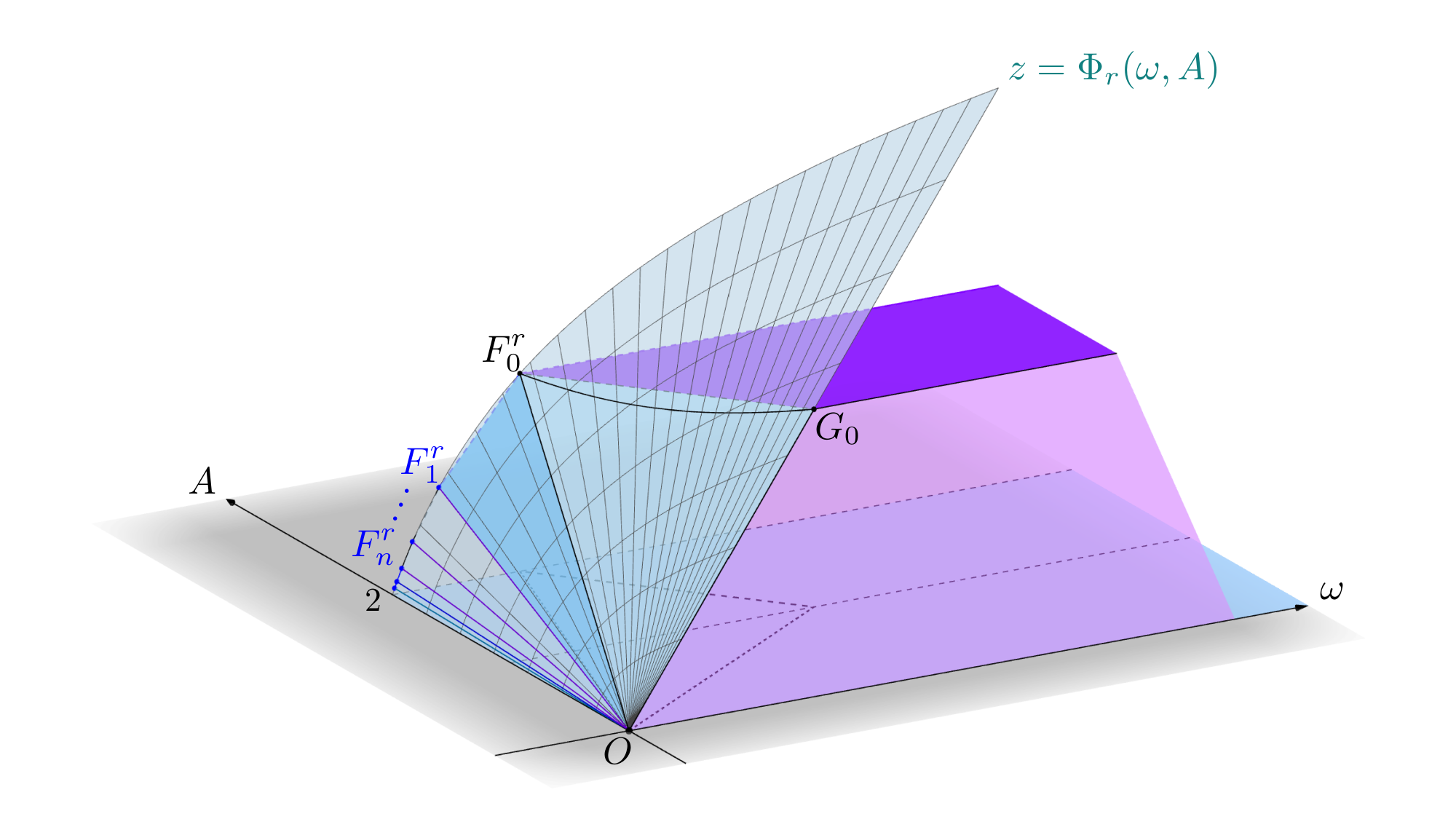}
\end{center}
\caption{Graph of $\mbm_r$ and the ``Bellman envelope'' surface $z=\Phi_r(\omega, A)$.}
\label{fig:MgraphPhi}
\end{figure}

Referring back to the graph in Figure \ref{fig:Mgraphr}, observe that the ``seams'' gluing the triangles together, i.e. the 
line segments $[OG_0]$ and $[O F_n^r]$; $n\geq 0$, all lie on the surface
$z=\Phi_r(\omega, A)$ (pictured in Figure \ref{fig:MgraphPhi}), where
    \begin{equation*}
        \Phi_r(\omega, A) := A\omega \cdot \left(
        \frac{2^{r+1}-1}{2^r \omega^r + (2^r-1)A^r}
        \right)^{\sfrac{1}{r}} ; \:\:\: 0\leq \omega \leq A \leq 2.
    \end{equation*}
This is a smooth concave function which coincides with the piecewise linear $\mbm_r$ along the edges $[OG_0]$, $[O F_n^r]$, so
    $$\Phi_r(\omega, A) \geq \mbm_r(\omega, A), \:\text{ for all } \: 0\leq \omega \leq A \leq 2.$$
Then, for every fixed $A$,
    $$
    \sup_{\omega>0} \: \frac{\mbm_r(\omega,A)}{\omega} = \lim_{\omega \rightarrow 0+} \: \frac{\mbm_r(\omega,A)}{\omega}
    \leq \lim_{\omega \rightarrow 0+} \: \frac{\Phi_r(\omega,A)}{\omega} = \left(
    \frac{2^{r+1}-1}{2^r-1}
    \right)^{\sfrac{1}{r}}.
    $$
But the limits are actually equal, since the functions coincide on the sequence (converging to $0$) $\{u_n\}_{n\geq 0}$, where $(u_n, A)$ are points on the line segments $[OF_n^r]$. So
    \begin{equation}
        \label{e:bound}
      \sup_{\alpha \in \mfs_2(I)} \|\mca_{\alpha, r}\|_{(1,1)} = 
      \sup_{A\in [0,2]} \: \sup_{\omega>0} \: \frac{\mbm_r(\omega, A)}{\omega} = C(r):= \left(
    \frac{2^{r+1}-1}{2^r-1}
    \right)^{\sfrac{1}{r}},
    \end{equation}
proving Proposition \ref{prop:bound}.

\begin{center}
  {\Large\ding{167}} 
\end{center}

Consider again the points 
$F_n^r = (\omega_n(r), \: 2, \: 2^{-n})$, $n\geq 0$, in Figure \ref{fig:Mgraphr}, which essentially ``pull the strings'' of the surface $z=\mbm_r(\omega, A)$ as $r \in (0,\infty)$ varies, and note that
    $$
    \lim_{r\rightarrow 0_+} \omega_n(r) = 0; \:\:\:\:
    \lim_{r\rightarrow \infty} \omega_n(r)= 2^{-n}.
    $$
Therefore the family of Bellman functions $\{\mbm_r\}_{r>0}$ has well-defined pointwise limits at the extremes (pictured at the opposite ends of Figure \ref{fig:acc}): as $r\rightarrow 0_+$, all but one of the triangles collapse onto the plane $\omega=0$,
    \begin{equation}
        \label{e:M0-def}
    \mbm_{0_+}(\omega, A) := \lim_{r\rightarrow 0_+} \mbm_r(\omega, A) = 
    \sup_{r>0} \mbm_r(\omega, A)
    =\min\left(1 \:, A,\: \frac{\omega+A}{2}\right),
    \end{equation}
and as $r\rightarrow \infty$, all triangles lay down on the plane $z=\omega$:
    \begin{equation}
        \label{e:Minf-def}
        \mbm_{\infty}(\omega, A) := \lim_{r\rightarrow \infty} \mbm_r(\omega, A)  
        = \inf_{r>0} \:\mbm_r(\omega, A)
        = \min(1, \: A, \: \omega).
    \end{equation}

\begin{figure}[h]
\begin{center}
\includegraphics[width=\linewidth]{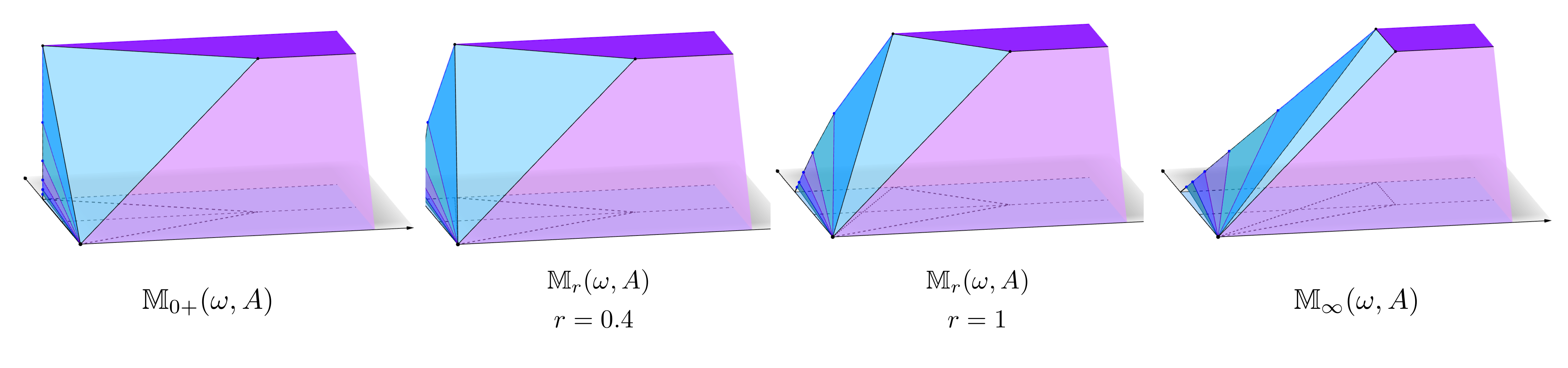}
\end{center}
\caption{An accordion of Bellman functions.}
\label{fig:acc}
\end{figure}


\subsection{Discussion} To best interpret these results, we should frame the conversation not in terms of $\mcd(I)$, but in terms of the \textit{sparse generational structure} imposed by $\alpha$ (see Figure \ref{fig:sparse} for an example). For any $J\in\mcd(I)$, we can define the collection of its \textit{$\alpha$-children}:
    $$
    \texttt{ch}_\alpha(J) := \{\text{maximal } K\in\mcd(I) \text{ such that } K\subsetneq J \text{ and } \alpha_K=1\},
    $$
where maximality is with respect to set inclusion. Note that, while any $J\in\mcd(I)$ has exactly two \textit{dyadic children}, $J$ can have anywhere from zero to countably many $\alpha$-children. The $\alpha$-generations are then
    $$
    \mcg_\alpha^0 := \{\text{maximal } K\in\mcd(I) \text{ such that } \alpha_K=1\}, \:\:\: \text{and}
    \:\:\: \mcg_\alpha^m := \bigcup_{K\in\mcg^{m-1}_\alpha} \: \texttt{ch}_{\alpha}(K) \text{ for } m\geq 1.
    $$
Each $\mcg_\alpha^m$ is a \textit{disjoint} collection of intervals in $\mcd(I)$, and we denote their union by $S_\alpha^m$. 

\begin{figure}[h]
\begin{center}
\includegraphics[width=\linewidth]{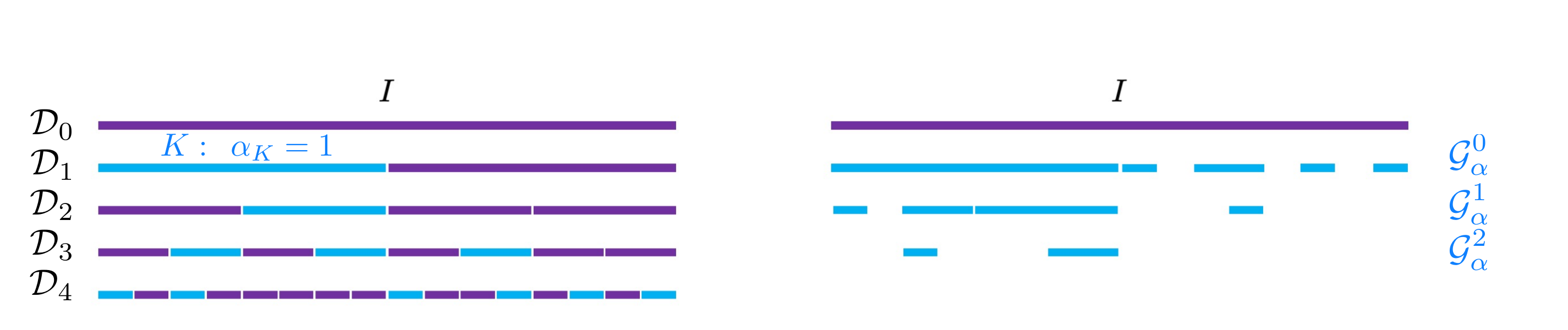}
\end{center}
\caption{A sparse collection $\alpha$, shown to the left spanning five dyadic generations within $\mcd(I)$, and shown to the right with its three sparse generations.}
\label{fig:sparse}
\end{figure}

\vspace{0.1in}

Observe that the support of $\mca_{\alpha,r}f$ is not just in $I$, but in $S_\alpha^0$. If $\alpha_I=1$ (i.e. the main interval $I$ is selected for $\scs_\alpha$), then $S_\alpha^0 = I$; otherwise, $S_\alpha^0$ can be a disjoint union which does not cover all of $I$.

Now, it is possible for $f$ to be, in a way, orthogonal to $\alpha$, i.e. if
    $
    f=0 \text{ a.e. on } S_\alpha^0 \subsetneq I.
    $
In that case, $\mca_{\alpha, r}f \equiv 0$ for all $r>0$. Assume going forward that this is not the case ($f \not\perp \alpha$). Equivalently, we assume there is at least one $K\in\mcg_\alpha^0$ such that $\La f\Ra_K\neq 0$.

\begin{center}
  {\Large\ding{167}} 
\end{center}

Looking at these operators $\mca_{\alpha,r}f$ across the $r$-spectrum, we find a delicate balance between the effect of $\scs_\alpha$'s geometry, versus the effect of the function $f$. 

\vspace{0.1in}


The power shift in $\alpha$ vs. $f$ occurs at $r=1$ (the classical sparse operator), which acts as a sort of ``equilibrium point'' where $\scs_\alpha$ and $f$ have balanced influence.  For $r<1$, the more important factor is the sparse geometry of $\scs_\alpha$, i.e. the ``amount of overlap'' between $\scs_\alpha$-intervals -- which is exactly what the Carleson condition keeps under control. For a function $f$, the overlap in $\alpha$ is much more costly for $r<1$ than it is for $r>1$.
This is a simple consequence of the classical inequality for sequences of non-negative reals:
    $$
    \text{For all } r\geq 1: \: 
    \big(\sum a_n^r\big)^{\sfrac{1}{r}} \leq \sum a_n \leq \big(\sum a_n^{\sfrac{1}{r}}\big)^r.
    $$
In context,
    $$
    \text{For all } r\geq 1: \: \left(\sum\limits_{K\in\scs_\alpha}\langle f\rangle_K^r\right)^{\sfrac{1}{r}} \leq \sum\limits_{K\in\scs_\alpha} \langle f\rangle_K \leq \left(\sum\limits_{K\in\scs_\alpha} \langle f\rangle_K^{\sfrac{1}{r}}\right)^r$$


\noindent So, it is not surprising that $C(r)$ blows up as $r \rightarrow 0+$: in this range, $\La f\Ra_I$ can become very small, while the level sets of $\mca_{\alpha, r}$ remain relatively balanced.

\vspace{0.1in}

The superpower of the sparseness condition is that it often serves as the next best thing to disjointness. This extreme situation, when $\scs_\alpha$ is a disjoint collection, occurs if and only if
    $
    \|\alpha\|_{\text{Car}} := \sup_{J\in\mcd(I)} \: A(\alpha; \: J) = 1,
    $
and in this case the $r$-spectrum remains unchanged for all $f$:
    $
    \mca_{\alpha, r} f(t) = \mca_{\alpha, 1} f(t), \text{ for all } r>0 \text{ and all } f\in L^1(I).
    $
As $\|\alpha\|_{\text{Car}}$ increases past $1$, the amount of overlap in $\alpha$ increases, with wildly different consequences for small versus large $r$.

\subsection{Extremes} To understand just how much $\alpha$ overpowers $f$ for small $r$, remark that
    $$
    \lim_{r\rightarrow 0_+} \mca_{\alpha, r}^r f(t) = \lim_{r\rightarrow 0_+} \:
    \sum_{\substack{K\in \scs_\alpha:  K \ni t}} \La f\Ra_K^r = 
    \sum_{K\in \scs_{\alpha}: \La f\Ra_K\neq 0} \unit_K(t),
    $$
so the averages of $f$ do not matter at all: the most $f$ can do is ``turn off' some intervals where it may be identically 0. For non-vanishing $f$, we always recover the full function
    $$
    \mca_\alpha \unit(t) := \sum_{K\in\scs_\alpha} \:\unit_K(t),
    $$
which is associated to any sparse collection. For every $t\in I$, this function \textit{counts} the number of $\scs_\alpha$-intervals which contain $t$, so $\mca_\alpha \unit$ takes values in $\mathbb{Z}_{\geq 0}$ (it may be $\infty$ for some $t$, but the Carleson condition ensures that the set of all such $t$ has measure zero).

We can take this further: let $f\geq 0$ in $L^1(I)$ with $\La f\Ra_I \neq 0$, and $\alpha \in \mfs_2(I)$ with $f \not\perp \alpha$, and consider, for $t \in I$,
    \begin{equation}
        \label{e:lim0}
        \mca_{\alpha, 0_+} f(t) := \lim_{r\rightarrow 0+} \:\mca_{\alpha, r}f(t)
        = \lim_{r\rightarrow 0+} \: \left(\sum_{\substack{K\in \scs_\alpha:  K \ni t}} \La f\Ra_K^r\right)^{\sfrac{1}{r}}.
    \end{equation}
As expected, this is very badly behaved, but it clarifies  just how costly any overlap in $\alpha$ is for small $r$. There are only two situations when \eqref{e:lim0} is finite: when the sum inside the parentheses is $0$, and when there is exactly one non-zero term in this sum. If $t\in I\setminus S_\alpha^0$, then $\mca_{\alpha, r}f(t)=0$ for all $r>0$, in which case the limit is $0$. So let $t\in S_\alpha^0$:
    \begin{itemize}
        \item  If there exists a $K\in\scs_\alpha$ in any lower sparse-generation $\mcg_\alpha^m$, $m\geq 1$, such that $t\in K$ \textit{ and } $\La f\Ra_K\neq 0$ (i.e. $f$ does not vanish on $K$), then the limit in \eqref{e:lim0} is $\infty$. 

        \item Otherwise, 
            $$
            \mca_{\alpha, r}f(t) = \La f\Ra_{K_0}, \text{ for all } r>0,
            $$
        where $K_0$ is the unique element in $\mcg_\alpha^0$ containing $t$. 
    \end{itemize}
For example, if $t\in S_\alpha^0 \setminus S_\alpha^1$, then the limit in \eqref{e:lim0} is $\La f\Ra_{K_0}$ across all functions $f$. Or, 
            $$
            \text{If } f \text{ vanishes on } S_\alpha^1, \text{ then }
            \mca_{\alpha, r}f(t) = \sum_{K_0 \in \mcg_\alpha^0} \La f\Ra_{K_0} \unit_{K_0}(t), \text{ for all } r>0,
            $$
        so in this case $\mca_{\alpha,0_+}f(t)$ is the well-defined sum above.

\vspace{0.1in}
In other words, any meaningful interaction between $[\mca\unit\geq 2]$ and $[f>0]$ immediately leads to a blowup. This can only be avoided by turning off one of these forces.
 
\vspace{0.1in}
This limit may be strange, but if the goal is to maximize level sets, $\infty$ is good. Let's try to make sense of the function in \eqref{e:Minf-def}
$$
\sup_{f,\alpha} \: \frac{1}{|I|}|\{t\in I: \mca_{\alpha,0_+}f(t) \geq 1\}| = 
\mbm_{0_+}(\omega, A) 
    =\min\left(1 \:, A,\: \frac{\omega+A}{2}\right),
$$
where supremum is over all $f\geq 0$ on $I$ with $\La f\Ra_I = \omega>0$, and all $\alpha \in \mfs_2(I)$ with $A(\alpha; \: I)=A$.
Since even one degree of overlap is enough for $\mca_{\alpha,0_+}f$ to blow up, it is enough to consider $\alpha$'s with only two sparse generations. 
Given $f$ and $\alpha$, what is the set $\{t\in I: \mca_{\alpha,0_+}f(t) \geq 1\}$?
    \begin{itemize}
        \item Any second $\alpha$-generation $K_1 \in \mcg_\alpha^1$ is automatically captured for the level set, as long as $\La f\Ra_{K_1} \neq 0$. So all $f$ has to do in order to capture $S_\alpha^1$ is not vanish there.
        \item Any remaining sets $K_0\setminus S_\alpha^1$ with $K_0\in\mcg_\alpha^0$ are either captured entirely -- if $\La f\Ra_{K_0}\geq 1$, or not at all if $\La f\Ra_{K_0}<1$.
    \end{itemize}
This explains the counterintuitive non-zero value of $\mbm_{0_+}$ at $\omega=0$ (which means $f\equiv 0$ on $I$!):
    $$
    \mbm_{0_+}(0,A) = \sfrac{A}{2}.
    $$
Roughly speaking, $\alpha$ is strong enough to capture all of $S_\alpha^1$ with barely any $f$ at all -- and the most $|S_\alpha^1|/|I|$ can be is $\sfrac{A}{2}$. So take $\alpha$ such that $|S_\alpha^0|/|I|= |S_\alpha^1|/|I| = \sfrac{A}{2}$. 
For any $\epsilon>0$, we can take a function $f=\sfrac{2\epsilon}{A} \unit_{S_\alpha^0}$. 
 Then $\La f\Ra_I=\epsilon$, and $f>0$  captures all of $S_\alpha^1$.

\vspace{0.1in}
Why is $\mbm_{0_+}(\omega,2)$ always $1$? Because for $A=2$ we can take $\scs_\alpha := \{I, \: I_-, \: I_+\}$, and then for any $\omega>0$ we can ensure $f$ does not vanish on either one of $I_-$ and $I_+$.

\vspace{0.1in}

For $\omega>0$ and $A<2$, it becomes a balancing act between how much of $f$ can we take to be  greater or equal to $1$ on $S_\alpha^0 \setminus S_\alpha^1$, and have enough left over to capture $S_\alpha^1$? This is a Lagrange multiplier optimization problem with solution precisely $\min(1, A, \tfrac{\omega+A}{2})$.


\begin{center}
  {\Large\ding{167}} 
\end{center}

\noindent \textbf{The limit as $r\rightarrow \infty$}. For $f\geq 0$ in $L^1(I)$ and $\alpha \in \mfs_2(I)$, the sequence $\{\mca_{\alpha, r}f\}_{r>0}$ converges pointwise almost everywhere to the maximal function operator adapted to $\alpha$:
    $$
    \mca_{\alpha, \infty}f(t) := \lim_{r\rightarrow\infty} \mca_{\alpha, r} f(t) = \sup_{K\in\scs_\alpha} \La f\Ra_K \unit_K(t) \text{ for a.a. } t\in I.
    $$
The Bellman function for level sets of this operator is given exactly by the function in \eqref{e:Minf-def},
    $$
    \sup_{f, \alpha} \frac{1}{|I|}
    |\{t\in I: \mca_{\alpha, \infty} f(t) \geq 1\}|=
    \mbm_{\infty}(\omega, A) = \min(1, \: A, \: \omega),
    $$
with supremum over all $f\geq 0$ on $I$ with $\La f\Ra_I = \omega$, and all $\alpha \in \mfs_2(I)$ with $A(\alpha; \: I)=A$. We can see this directly, without appealing to Bellman arguments. Note that
the level set $\{t\in I: \mca_{\alpha, \infty} f(t) \geq 1\}$ is the disjoint union of the collection of maximal $K\in\scs_\alpha$ (if any) such that $\La f\Ra_K\geq 1$. 
    \begin{itemize}
        \item If $A\geq 1$ and $\omega\geq 1$, we can always take $\alpha$ with $\alpha_I=1$, and $f\equiv \omega$ on $I$, which wins us all of $I$ for the level set.

        \item If $A<1$ and $A<\omega$, the most we can expect is $|S_\alpha^0|/|I| \leq A <1$. We can attain the maximum $A$: take $\alpha$ to be a disjoint collection (only one sparse generation), and take $f$ to be $1$ on $S_\alpha^0$ and $\frac{\omega-A}{1-A} \geq 0$ on the rest of $I$. Then $\La f\Ra_I = \omega$ and $\La f\Ra_K= 1$ for all $K\in\mcg_\alpha^0$, so $\mca_{\alpha, \infty}f \geq 1$ on all of $S_\alpha^0$.

        \item Finally, if $\omega<1$ and $\omega<A$, note that we can only take $f\geq 1$ on a set of length at most $\omega<A$. 
    \end{itemize}


\subsection{Weak-type $(p,p)$ bounds for $\mcq_{\alpha, p}$ operators}
\label{sS:power-mean}

Let $p\geq 1$. By H\"{o}lder's inequality 
    \begin{equation}
        \label{e:comparison}
        \mca_{\alpha, p} f \leq \left(\mca_{\alpha, 1}(|f|^p)\right)^{\sfrac{1}{p}}
        \leq \mcq_{\alpha, p}f,
    \end{equation}
with all operators coinciding at $p=1$. For $\alpha\in \mfs_2(I)$:
    $$
    \|\mcq_{\alpha, p}\|_{(p,p)} := \|\mcq_{\alpha, p} : L^p(I) \rightarrow L^{p,\infty}(I)\| =
    \sup_{f, \lambda} \left(\frac{\lambda^p}{\La f^p\Ra_I} \frac{1}{|I|} |\{t\in I: \mcq_{\alpha, p}f(t) \geq \lambda\}|\right)^{\sfrac{1}{p}}
    $$
where the supremum is over all $f\geq 0$ in $L^p(I)$, and all $\lambda>0$. But observe that
    $$
    \mcq_{\alpha, p}f = \left(\mca_{\alpha, \sfrac{1}{p}}(f^p)\right)^{\sfrac{1}{p}},
    $$
so
    $$
    \sup_{\alpha\in\mfs_2(I)} \: \|\mcq_{\alpha, p}\|_{(p,p)} = \sup_{\substack{y>0, \: \lambda>0;\\ A\in [0,2]}} \left(\frac{\lambda^p}{y} \mbb_{\sfrac{1}{p}} (y, A, \lambda) \right)^{\sfrac{1}{p}}
    = \sup_{A\in [0,2]} \: \sup_{\omega>0} \left(\frac{\mbm_{\sfrac{1}{p}}(\omega, A)}{\omega}\right)^{\sfrac{1}{p}} = C(\sfrac{1}{p})^{\sfrac{1}{p}},
    $$
proving Corollary \ref{cor:power-mean}.

\section{Bellman Function Properties}
\label{S:Bellman}

Let $r>0$, and recall our Bellman function:
\begin{equation} \label{E:B-def}
\mbb_r (x,A,\lambda) := \sup_{f, \alpha} \frac{1}{|I|} |\{t\in I: \mca_{\alpha, r}^r f(t) \geq\lambda\}|,
\end{equation} 
where
    $$
    \mca^r_{\alpha, r}f(t) = \sum_{J\in\mcd(I)} \alpha_J \: \langle |f|\rangle_J^r \: \unit_J(t),
    $$
and supremum is over all
\begin{itemize}
\item non-negative functions $f\geq 0$ in $L^1(I)$ with fixed average 
    $    \langle f \rangle_I = x;   $
\item sequences $\alpha\in\mfs_2(I)$ with 
    $    A(\alpha; I) = A.    $
\end{itemize}
We say any such pair $(f,\alpha)$ is an \textit{admissible pair} for $\mbb_r(x,A,\lambda)$. For the rest of this paper, we assume $r>0$ is fixed and suppress the $r$-subscript.

\vspace{0.1in}
We usually define the domain of a Bellman function to be the set of all points where $\mbb>-\infty$ (i.e. we are not taking supremum over the empty set). In this case, 
the domain is
    $$
    \Omega_\mbb:=[0,\infty) \times [0,2] \times \mbr.
    $$
It is obvious from the definition that
    $$
    0 \leq \mbb(x,A,\lambda)\leq 1.
    $$
The next theorem lists the basic properties of $\mbb$. 

\begin{theorem}
    \label{T:Bellman-Props}
    The function $\mbb:\Omega_\mbb\rightarrow [0,1]$ defined above satisfies the following:

    \begin{enumerate}[label=\textnormal{(\roman*)}, itemsep=0.6em, leftmargin=2.2em]
        \item \textbf{Interval Independence:} $\mbb$ is independent of the choice of main interval $I$.

        \item \textbf{Obstacle Condition:} For all $\lambda \leq 0$,
        \begin{equation}
            \label{e:mbb-OC}
            \mbb(x,A,\lambda) = 1.
        \end{equation}

        \item \textbf{Initial Conditions:} For all $\lambda > 0$,
        \[
        \mbb(0,A,\lambda) = 0 \quad \text{and} \quad \mbb(x,0,\lambda) = 0.
        \]

        \item \textbf{Homogeneity:} For all $c > 0$,
        \begin{equation}
            \label{e:mbb-Hom}
            \mbb(x,A,\lambda) = \mbb(cx, A, c^r \lambda).
        \end{equation}

        \item \textbf{Main Inequality:} For all points in the domain,
        \begin{equation}
            \label{e:mbb-MI}
            \mbb(x, A+\gamma, \lambda + \gamma \cdot x^r) \geq \frac{1}{2}\bigg(\mbb(x_1,A_1,\lambda) + \mbb(x_2,A_2,\lambda)\bigg),
        \end{equation}
        where
        \[
        x = \La x_i \Ra := \frac{x_1 + x_2}{2}, \quad
        A = \La A_i \Ra := \frac{A_1 + A_2}{2}, \quad
        \gamma \in \{0,1\}, \quad
        A + \gamma \leq 2.
        \]
    \end{enumerate}
\end{theorem}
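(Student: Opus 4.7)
The plan is to handle (i)--(iv) by direct manipulation of admissible pairs, and to prove (v) by a concatenation construction. For \textbf{interval independence}, the affine bijection $\varphi:I\to I'$ induces a bijection on admissible pairs via $(f,\alpha)\mapsto(f\circ\varphi^{-1},\tilde\alpha)$ with $\tilde\alpha_{\varphi(J)}:=\alpha_J$; dilations and translations preserve the averages $\La f\Ra$, the Carleson averages $A(\alpha;J)$, the operator $\mca^r_{\alpha,r}$ pointwise, and the normalized level-set measure, so both suprema coincide. For \textbf{the obstacle condition}, $\mca^r_{\alpha,r}f\geq 0$ pointwise forces the level set to be all of $I$ whenever $\lambda\leq 0$, and admissible pairs always exist for $(x,A)\in[0,\infty)\times[0,2]$. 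For \textbf{the initial conditions}, $x=0$ forces $f\equiv 0$ a.e., while $A=0$ combined with $\alpha_K\in\{0,1\}$ and $|K|>0$ forces $\alpha\equiv 0$; in either case $\mca^r_{\alpha,r}f\equiv 0$, so the level set is empty for $\lambda>0$. For \textbf{homogeneity}, the substitution $f\mapsto cf$ gives $\La cf\Ra_I=cx$ and $\mca^r_{\alpha,r}(cf)=c^r\mca^r_{\alpha,r}f$, so $(f,\alpha)\mapsto(cf,\alpha)$ is a bijection between admissible pairs for $(x,A,\lambda)$ and for $(cx,A,c^r\lambda)$ preserving the normalized level set.

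The substantive step is the \textbf{main inequality}, which I would prove by concatenation. Fix $\epsilon>0$, and using (i) pick admissible pairs $(f_i,\alpha^{(i)})$ on the half-intervals $I_\pm$ for $\mbb(x_i,A_i,\lambda)$ whose normalized level-set measure is within $\epsilon$ of the supremum. Glue them on $I$ by setting
\[
f(t) := f_1(t)\unit_{I_-}(t)+f_2(t)\unit_{I_+}(t),\qquad \alpha_I:=\gamma,\qquad \alpha_K:=\alpha^{(i)}_K \text{ for } K\in\mcd(I_\pm).
\]
Then $\La f\Ra_I=(x_1+x_2)/2=x$. For the Carleson condition, any proper $J\subsetneq I$ lies entirely in one of $\mcd(I_\pm)$, giving $A(\alpha;J)=A(\alpha^{(i)};J)\leq 2$; and at $J=I$,
\[
A(\alpha;I) = \gamma + \tfrac{1}{|I|}\bigl[\,|I_-|\,A(\alpha^{(1)};I_-)+|I_+|\,A(\alpha^{(2)};I_+)\bigr] = \gamma + \tfrac{A_1+A_2}{2} = \gamma+A \leq 2,
\]
so $\alpha\in\mfs_2(I)$ with $A(\alpha;I)=A+\gamma$, as required.

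The key observation is the operator splitting: for $t\in I_\pm$, the only new contribution to $\mca^r_{\alpha,r}f(t)$ beyond $\mca^r_{\alpha^{(i)},r}f_i(t)$ comes from the $J=I$ term, so $\mca^r_{\alpha,r}f(t) = \gamma x^r + \mca^r_{\alpha^{(i)},r}f_i(t)$. Hence $\{t\in I:\mca^r_{\alpha,r}f(t)\geq\lambda+\gamma x^r\}$ is the disjoint union of the two half-interval level sets at height $\lambda$, and dividing by $|I|=2|I_\pm|$ yields
\[
\tfrac{1}{|I|}\bigl|\{\mca^r_{\alpha,r}f\geq\lambda+\gamma x^r\}\bigr| = \tfrac{1}{2}\sum_{i=1}^{2}\tfrac{1}{|I_\pm|}\bigl|\{\mca^r_{\alpha^{(i)},r}f_i\geq\lambda\}\bigr|.
\]
The left-hand side is bounded above by $\mbb(x,A+\gamma,\lambda+\gamma x^r)$, and letting $\epsilon\to 0$ gives (v). I expect the only real technical obstacle to be the Carleson verification for the glued sequence; but because $\mcd(I)\setminus\{I\}$ partitions as $\mcd(I_-)\sqcup\mcd(I_+)$, this reduces to the single calculation at $J=I$ above, with every other $J$ inheriting the $2$-Carleson bound directly from the corresponding half-pair.
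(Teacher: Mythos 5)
Your proof is correct and follows essentially the same route as the paper: rescaling for (i), direct observations for (ii)--(iv), and for (v) the same concatenation of near-extremal pairs on the two halves with the choice $\alpha_I=\gamma$, the same Carleson verification at $J=I$, and the same operator splitting $\mca^r_{\alpha,r}f = \gamma x^r + \mca^r_{\alpha^{(i)},r}f_i$ on each half. The only cosmetic difference is that in (iv) you phrase it as a bijection of admissible pairs, whereas the paper shows one inequality and then substitutes $c\mapsto 1/c$; both amount to the same thing.
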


\vspace{10pt}

\begin{proof}

    {\noindent\em (i)} If $f\in L^1([a_0,b_0))$, define $g\in L^1([a_1,b_1))$ as $g(x):=f(a_0+\frac{b_0-a_0}{b_1-a_1}(x-a_1))$. Then clearly 
    $\langle g\rangle_{[a_1,b_1)} = \langle f\rangle_{[a_0,b_0)} $, and
    \begin{equation*}
        \frac{1}{b_0-a_0}|\{t\in [a_0,b_0): \mca_{\alpha,r}^r f(t) \geq\lambda\}|=\frac{1}{b_1-a_1}|\{t\in [a_1,b_1): \mca_{\alpha,r}^r g(t) \geq\lambda\}|.
    \end{equation*}

    \vspace{0.1in}

    {\noindent\em (ii)} This is an immediate consequence of the fact that $f\geq 0$.

    \vspace{0.1in}

    {\noindent\em (iii)} The first equality follows from the realization that a nonnegative function with average $\langle f\rangle = 0$ must satisfy $f\equiv0$ a.e. The second equality is obvious: if $A=0$, then $\alpha_J=0$ for all $J\in\mathcal{D}(I)$ and $\mca_{\alpha,r}^rf(t)=0$ for all $t$.

    \vspace{0.1in}

    {\noindent\em (iv)} Let $(f, \: \alpha)$ be an admissible pair for $\mbb(x,A,\lambda)$. Then, for every $c>0$, the function $c f$ satisfies $\langle cf\rangle_I = c\langle f\rangle_I = cx$. Therefore $(cf, \: \alpha)$ is an admissible pair for $\mathbb{B}(cx,A,c^r\lambda)$, so
        $$
        \mathbb{B}(cx,A,c^r\lambda) \geq \frac{1}{|I|} \left|\left\{ t\in I: \mca_{\alpha,r}^r (cf) (t) \geq c^r\lambda \right\}\right| = 
        \frac{1}{|I|}\left|\left\{ t\in I: \mca_{\alpha,r}^r (f) (t) \geq \lambda \right\}\right|.
        $$
    Since this holds for all pairs $(f,\alpha)$ admissible for $\mbb(x,A,\lambda)$, we have
        $$
        \mbb(x,A,\lambda) \leq \mbb(cx, A, c^r \lambda), \:\text{ for all } c>0.
        $$
    We can replace $c$ with $\sfrac{1}{c}$ above and obtain the equality in \eqref{e:mbb-Hom}.
    
       \vspace{0.1in}

    {\noindent\em (v)} Let $\epsilon>0$ and $(x_i, A_i, \lambda)$, $i=1,2$ be two points in the domain of $\mbb$. Let $I$ be an interval, and denote its left and right halves by $I_1=I_-$ and $I_2=I_+$, respectively. For each $i=1,2$, let
        $(f_i, \: \alpha_i)$ be an admissible pair for $\mbb(x_i, A_i, \lambda)$, where
            \begin{align*}
                & f_i \in L^1(I_i); \: f_i \geq 0; \: \La f_i\Ra_{I_i}=x_i\\
                & \alpha_i \in \mfs_2(I_i); \: A_i=A(\alpha_i;I_i).
            \end{align*}
    Moreover, we can assume $(f_i, \: \alpha_i)$ were chosen in such a way that they ``almost give the supremum:''
        \begin{equation}
            \label{e:MIbla}
        \frac{1}{|I_i|}|\{t\in I_i: \mathcal{A}_{\alpha_i, r}^rf_i\geq\lambda\}|\geq \mathbb{B}(x_i,A_i,\lambda) - \epsilon.
        \end{equation}
    Now, concatenate $f_1$ and $f_2$ to obtain:
    \begin{equation*}
        f(t)=f_1(t)\unit_{I_1}(t) + f_2(t)\unit_{I_2}(t),
    \end{equation*}
   a function in $L^1(I)$ with 
    $$
    \La f\Ra_I = x := \La x_i \Ra.
    $$

When concatenating the sparse collections $\alpha_1$ and $\alpha_2$, to obtain a new sparse collection $\alpha$ \textit{adapted to $I$}, we have two choices for assigning $\alpha_I$: either $\alpha_I :=0 $, or $\alpha_I := 1$ (in  \eqref{e:mbb-MI}, the parameter $\gamma$ plays the role of $\alpha_I$). Then
    \begin{equation*}       
        \mca_{\alpha,r}^rf(t)=\alpha_Ix^r\unit(t) + \sum\limits_{J\in\mathcal{D}(I_1)}\alpha_J\langle |f|\rangle^r\unit(t)+\sum\limits_{J\in\mathcal{D}(I_2)}\alpha_J\langle |f|\rangle^r\unit(t).
    \end{equation*}
The new collection $\alpha$ has total mass
    $$
    A(\alpha,I)=A+\alpha_I, \text{ where } A := \La A_i\Ra.
    $$
Assuming $A+\alpha_I \leq 2$, the pair $(f, \alpha)$ is then admissible for $\mbb(x, A+\gamma, \lambda + \gamma x^r)$, where $\gamma := \alpha_I$. Then
    \begin{align*}
    \mbb(x, A+\gamma, \lambda + \gamma x^r)  & \geq
    \frac{1}{|I|} \left|\left\{t\in I: \mca_{\alpha,r}^rf(t) \geq \lambda+\gamma x^r\right\}\right|
     = \frac{1}{2} \sum_{i=1}^2 \frac{1}{|I_i|} \left|\left\{
    t \in I_i: \mca_{\alpha_i, r}^r f_i(t) \geq \lambda
    \right\}\right|
    \end{align*}
By our assumption in \eqref{e:MIbla}, after taking $\epsilon \rightarrow 0$, we obtain exactly \eqref{e:mbb-MI}.
\end{proof}

\subsection{The family of supersolutions}
\label{subS:supersolutions}

Let $\mathscr{B}$ denote the collection of all functions  $$
    B(x,A,\lambda) : \Omega_\mbb \rightarrow [0,1]
    $$
 which satisfy the Obstacle Condition:
        \begin{equation}
            \label{e:B-OC}
            B(x,A,\lambda)=1, \: \text{ for all } \lambda\leq 0,
        \end{equation}
 and the Main Inequality:           
 \begin{equation}
                \label{e:B-MI}
                B(x, \: A+\gamma,\: \lambda+\gamma \cdot x^r) \geq \frac{1}{2}\bigg(B(x_1,A_1,\lambda) + B(x_2,A_2,\lambda)\bigg),
            \end{equation}
for all points in the domain with
            $
            x=\La x_i\Ra; A=\La A_i\Ra; \gamma \in \{0,1\}; A+\gamma \leq 2.
            $

\begin{theorem}[Least Supersolution Property of $\mbb$]
    \label{T:LS}
    For all points $(x,A,\lambda)\in\Omega_\mbb$, we have:
    $$
    \mbb(x,A,\lambda) \leq B(x,A,\lambda),
    $$
    for all $B\in\scb$.
\end{theorem}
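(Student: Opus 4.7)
The plan is to fix an arbitrary $B\in\scb$ and an admissible pair $(f,\alpha)$ for $\mbb(x,A,\lambda)$, and derive the pointwise bound
$|\{t\in I:\mca_{\alpha,r}^r f(t)\geq\lambda\}|/|I|\leq B(x,A,\lambda)$;
taking the supremum over admissible pairs will then yield the theorem. The idea is the standard Bellman induction: iterate the Main Inequality down the dyadic tree $\mcd(I)$, invoke the Obstacle Condition on branches where the level parameter has fallen to zero, and pass to the limit in $n$.

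Concretely, at each $J\in\mcd(I)$ I attach local parameters
\[
x(J):=\La f\Ra_J,\qquad A(J):=A(\alpha;J),\qquad \lambda(J):=\lambda-\sum_{\substack{K\in\mcd(I)\\ K\supsetneq J}}\alpha_K\La f\Ra_K^r.
\]
A direct computation gives $x(J)=\La x(J_\pm)\Ra$, $A(J)=\alpha_J+\La A(J_\pm)\Ra$ (from the Carleson splitting), and $\lambda(J_\pm)=\lambda(J)-\alpha_J\,x(J)^r$; these are exactly the hypotheses of MI at $J$ with $\gamma:=\alpha_J\in\{0,1\}$. An induction on $n$ then yields
\[
B(x,A,\lambda)\;\geq\;\frac{1}{|I|}\sum_{J\in\mcd_n(I)}|J|\,B\bigl(x(J),A(J),\lambda(J)\bigr).
\]
Using $B\geq 0$ to discard the terms with $\lambda(J)>0$, and the Obstacle Condition $B\equiv 1$ on $\{\lambda\leq 0\}$ on the remaining ones, gives
\[
B(x,A,\lambda)\;\geq\;\frac{|E_n|}{|I|},\qquad E_n:=\bigcup_{\substack{J\in\mcd_n(I)\\ \lambda(J)\leq 0}}J.
\]
Now $\lambda(J_n(t))\leq 0$ is equivalent to $T_n(t)\geq \lambda$, where $T_n(t):=\sum_{K\in\mcd(I):\,\mathrm{depth}(K)<n,\,K\ni t}\alpha_K\La f\Ra_K^r$. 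Since $T_n\nearrow \mca_{\alpha,r}^r f$ pointwise, the $E_n$ are nested increasing with $\bigcup_n E_n\supseteq\{\mca_{\alpha,r}^r f>\lambda\}$, so letting $n\to\infty$ produces
\[
B(x,A,\lambda)\;\geq\;\frac{1}{|I|}\,|\{t\in I:\mca_{\alpha,r}^r f(t) > \lambda\}|.
\]

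The main obstacle is the gap between the strict ``$>\lambda$'' set output by the iteration and the ``$\geq\lambda$'' level set in the definition of $\mbb$. I would close it by repeating the whole argument with threshold $\lambda-\epsilon$ in place of $\lambda$ --- the same $(f,\alpha)$ remains admissible, since admissibility does not involve $\lambda$ --- to obtain $B(x,A,\lambda-\epsilon)\geq |\{\mca_{\alpha,r}^r f\geq\lambda\}|/|I|$ for every $\epsilon>0$, hence $B(x,A,\lambda-\epsilon)\geq \mbb(x,A,\lambda)$ after taking suprema over admissible pairs. Passing $\epsilon\to 0^+$ then requires a standard regularization, e.g.\ replacing $B$ by its left-continuous-in-$\lambda$ envelope and verifying that it still satisfies OC and MI (hence lies in $\scb$); this upgrades the bound to $B(x,A,\lambda)\geq \mbb(x,A,\lambda)$, completing the proof.
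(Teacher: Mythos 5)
Your core argument is the same Bellman induction the paper uses, with the identical Bellman points $x(J), A(J), \lambda(J)$, the same dynamics verifying the hypotheses of the Main Inequality at each node, and the same invocation of non-negativity plus the Obstacle Condition at the bottom. The structural difference is where you take the limit. The paper first reduces (by a ``standard limiting argument'' it invokes but does not spell out) to $\alpha$ with finitely many non-zero entries; for such $\alpha$ the operator $\mca_{\alpha,r}^r f$ is constant on each $J \in \mcd_N(I)$, the iteration terminates at a finite level $N$, and the set $\{\lambda_J \le 0\}$ over $J \in \mcd_N(I)$ is \emph{exactly} the non-strict level set $\{\mca_{\alpha,r}^r f \ge \lambda\}$, so the $>$ versus $\ge$ distinction never arises in the computation. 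You instead run the iteration to infinity for general $\alpha$, and then correctly observe that $\bigcup_n E_n$ only certifies the strict level set $\{\mca_{\alpha,r}^r f > \lambda\}$.

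The gap is in your final regularization step. You propose to replace $B$ by ``its left-continuous-in-$\lambda$ envelope'' $\tilde B$, verify $\tilde B \in \scb$, and deduce $B \ge \mbb$. But the left-limit envelope $\tilde B(x,A,\lambda) = \liminf_{\mu\to\lambda^-} B(x,A,\mu)$ need not satisfy $\tilde B \le B$ for an arbitrary $B \in \scb$, since elements of $\scb$ carry no a priori monotonicity or regularity in $\lambda$. So even if you show $\tilde B \in \scb$ and $\tilde B \ge \mbb$, this does not transfer to $B \ge \mbb$. The envelope that \emph{does} satisfy $\bar B \le B$, lies in $\scb$, and is monotone in $\lambda$ is $\bar B(x,A,\lambda) := \inf_{\mu \le \lambda} B(x,A,\mu)$ — but a non-increasing function is still not left-continuous in general, so the limit $\epsilon\to 0^+$ gives only $\bar B(x,A,\lambda^-) \ge \mbb(x,A,\lambda)$, not $\bar B(x,A,\lambda) \ge \mbb(x,A,\lambda)$. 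As written, the regularization does not close the gap; to repair it you would either need to establish left-continuity of $\bar B$ along the relevant rays, or (simpler, and what the paper does) prove the inequality first for finitely supported $\alpha$, where the iteration captures the $\ge$ level set on the nose, and then handle the passage to general $\alpha$ as a separate approximation step.
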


\begin{proof}

Fix a supersolution function $B\in \scb$ and a point $(x,A,\lambda)$ in the domain. We will show that
    \begin{equation}
        \label{E:LSstar}
    \frac{1}{|I|} \left|\left\{t \in I: \mca_{\alpha,r}^r f(t) \geq \lambda\right\}\right|
    \leq B(x,A,\lambda)
    \end{equation}
holds for all pairs $(f,\alpha)$ admissible for $\mbb(x,A,\lambda)$. 

By a standard limiting argument, it is enough to show that \eqref{E:LSstar} holds for sequences $\alpha$ with finitely many non-zero terms. Assume there is some dyadic level $N\in\mbn$ such that
    $$
    \alpha_J =0, \text{ for all } J \in \mcd(I) \text{ with } \frac{|J|}{|I|} \leq \frac{1}{2^N}.
    $$
For such $\alpha$, the quantity $\mca_{\alpha,r}^r f$ is constant on each of these ``terminal'' intervals $J\in\mcd_N(I)$: 
    $$
    \mca_{\alpha, r}^r f(t) = \sum_{K \in \mcd(I): \: K \supsetneq J} \alpha_K \La f\Ra_K^r, 
    \text{ for all } t \in J, \: J \in \mcd_N(I).
    $$
To every $J\in\mcd(I)$ we associate the \textit{Bellman point} 
    $$
    p_J : = (x_J, A_J, \lambda_J),
    $$
where 
    $$
    x_J := \La f\Ra_J; \:\:\:\:\:\:\:
    A_J := A(\alpha; \: J); \:\:\:\:\:\:\:
    \lambda_J := \lambda - \sum_{K \in \mcd(I): \: K \supsetneq J} \alpha_K \La f\Ra_K^r.
    $$
Remark that these always satisfy
    $$
    x_J = \frac{x_{J_-} + x_{J_+}}{2}; \:\:\:
    A_J = \frac{A_{J_-}+A_{J_+}}{2} + \alpha_J; \:\:\:
    \lambda_{J_-} = \lambda_{J_{+}} = \lambda_J - \alpha_J x_J^r,
    $$
for all $J\in\mcd(I)$.

Begin with $p_I=(x_I, A_I, \lambda_I) = (x,A,\lambda)$, and apply the Main Inequality \eqref{e:B-MI} recursively for $B$:
    \begin{align*}
    B(x,A,\lambda) = B(p_I) & \geq 
    \frac{1}{2}\sum_{i=1}^2 \left(B(p_{I_-}) + B(p_{I_+})\right) 
    = \frac{1}{2} \sum_{J\in\mcd_1(I)} B(p_J)\\
    & \geq \frac{1}{2} \sum_{J \in \mcd_1(I)} \frac{1}{2}
    \left( B(p_{J_-}) + B(p_{J_+}) \right) = 
    \frac{1}{2^2} \sum_{J \in \mcd_2(I)} B(p_J)\\
    & \vdots\\
    & \geq \frac{1}{2^{N}} \sum_{J \in \mcd_{N}(I)} B(x_J, A_J, \lambda_J).
    \end{align*}
Now, observe that the level set $\{t\in I: \mca_{\alpha, r}^r f(t) \geq \lambda\}$ is a disjoint union of intervals $J\in\mcd_N(I)$, namely those for which $\lambda_J \leq 0$. 
Finally, by non-negativity of $B$ and the Obstacle Condition:
    $$
    B(x,A,\lambda) \geq \frac{1}{2^{N}} \sum_{J \in \mcd_{N}(I): \: \lambda_J \leq 0} B(x_J, A_J, \lambda_J) 
    = \frac{1}{2^{N}} \#\{J\in\mcd_N(I): \: \lambda_J \leq 0\},
    $$
which proves \eqref{E:LSstar}.

\end{proof}

\subsection{Transition to a minimization problem} 
Since $\mbb$ itself belongs to the collection $\scb$ (which is closed under minimization), Theorem \ref{T:LS} says that finding $\mbb$ is the same as finding the minimal function in $\scb$:
    $$
    \mbb(x,A,\lambda)= \min_{B\in \scb} B(x,A,\lambda).
    $$

Our strategy going forward will be to construct a ``candidate'' function $\bfb(x,A,\lambda)$, in such a way that
    $$
    \bfb \leq B, \text{ for all } B\in \scb.
    $$
We do this by propagating data through the domain using the Main Inequality, starting with
the initial data provided by the Obstacle.
This is detailed in Section \ref{S:Construction}.

Now, just because $\bfb$ is \textit{a} minimizer for the collection $\scb$, does not mean it is ``the best'' minimizer.
To see that our candidate $\bfb$ is indeed $\mbb$, we prove in Section \ref{S:ProofTrue} that $\bfb$ belongs to $\scb$.

\vspace{0.8em}
\begin{center}
  {\Large\ding{167}} 
\end{center}
\vspace{0.8em}

Before we proceed, let us note that \eqref{e:B-MI} can be simplified as follows:

\begin{lemma}
    A function $B$ satisfies \eqref{e:B-MI} if and only if $B$ satisfies the following two conditions:
     \begin{enumerate}[label=\textnormal{(\roman*)}, itemsep=0.6em, leftmargin=2.2em]
        \item \textbf{Concavity in the first two variables:}  
        \begin{equation}
        \label{e:B-C}
        B(x,A,\lambda) \geq \frac{1}{2}\sum_{i=1}^2 B(x_i, A_i, \lambda), \text{ for all }
        x=\La x_i\Ra; \: A=\La A_i\Ra.
        \end{equation}

    \item \textbf{Jump Inequality:}
        \begin{equation}
        \label{e:B-J}
        B(x,A+1,\lambda+x^r) \geq B(x,A,\lambda), \text{ for all } A\in [0,1].
    \end{equation}
    \end{enumerate}
\end{lemma}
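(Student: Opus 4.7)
The plan is to prove the equivalence by specialization in one direction and by chaining two inequalities in the other.

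For the forward direction, I would assume $B$ satisfies \eqref{e:B-MI} and obtain the two listed conditions as special cases. Setting $\gamma = 0$ in \eqref{e:B-MI} gives, for all averaging decompositions $x = \langle x_i \rangle$, $A = \langle A_i\rangle$, the concavity inequality \eqref{e:B-C}. Setting $\gamma = 1$ with the trivial decomposition $x_1 = x_2 = x$, $A_1 = A_2 = A$ (so that the averaging constraint $x = \langle x_i \rangle$, $A = \langle A_i\rangle$ holds automatically, and the condition $A + 1 \leq 2$ becomes $A \in [0,1]$) gives the Jump Inequality \eqref{e:B-J}.

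For the reverse direction, suppose $B$ satisfies \eqref{e:B-C} and \eqref{e:B-J}. The case $\gamma = 0$ of \eqref{e:B-MI} is exactly \eqref{e:B-C}. For $\gamma = 1$, fix any admissible point and decomposition with $x = \langle x_i\rangle$, $A = \langle A_i\rangle$, and $A + 1 \leq 2$, i.e.\ $A \in [0,1]$. First apply concavity at the level $\lambda$:
\[
B(x, A, \lambda) \geq \frac{1}{2}\bigl(B(x_1, A_1, \lambda) + B(x_2, A_2, \lambda)\bigr).
\]
Then, since $A \in [0,1]$, the Jump Inequality applies to the left-hand side and gives
\[
B(x, A+1, \lambda + x^r) \geq B(x, A, \lambda).
\]
Chaining these two estimates produces \eqref{e:B-MI} for $\gamma = 1$, completing the equivalence.

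There is no real obstacle here; the only subtlety worth flagging is the compatibility of domain constraints, namely that the intermediate point $(x, A, \lambda)$ used in the chain lies in $\Omega_\mbb$ and satisfies $A \in [0,1]$ precisely because the hypothesis $A + \gamma \leq 2$ in \eqref{e:B-MI} forces $A \leq 1$ when $\gamma = 1$, which is exactly the range in which \eqref{e:B-J} is assumed.
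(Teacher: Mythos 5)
Your proof is correct and follows essentially the same path as the paper's: specialize $\gamma$ and the decomposition in the forward direction, and chain the Jump Inequality with concavity in the reverse direction. The extra remark about the domain constraint $A+\gamma\leq 2$ forcing $A\in[0,1]$ when $\gamma=1$ is a nice clarification, but the core argument is identical.
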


\begin{proof}
    First suppose a function $B$ satisfies \eqref{e:B-MI}. Let $\gamma=0$ to obtain condition \eqref{e:B-C}, and let $\gamma=1$, $A=A_1=A_2$, and $x=x_1=x_2$ to obtain condition \eqref{e:B-J}.
    
    Conversely, suppose $B$ satisfies \eqref{e:B-C} and \eqref{e:B-J}, and let $\gamma\in\{0,1\}$. For $\gamma=0$, \eqref{e:B-MI} is precisely \eqref{e:B-C}. To obtain \eqref{e:B-MI} for the $\gamma=1$ case, we apply both \eqref{e:B-C} and \eqref{e:B-J}:
    $$
    B(x,A+1,\lambda + x^r) \geq B(x,A,\lambda) \geq \frac{1}{2}\sum\limits_{i=1}^2B(x_i,A_i,\lambda).
    $$
\end{proof}

\begin{remark}
    Since our functions are bounded, we can immediately upgrade midpoint concavity in \eqref{e:B-C} to full concavity (see page 12 in \cite{MR3363413}).
\end{remark}

\section{Constructing the Candidate $\bfb$}
\label{S:Construction}

Let any function $B \in \scb$. We will construct a function $\bfb$ such that $\bfb\leq B$ throughout the domain.
Now, recall that the ``true'' Bellman function $\mbb$ satisfies the homogeneity property \eqref{e:mbb-Hom}; since $\mbb$ is itself in $\scb$, there is no loss of generality if we restrict our search to functions $B\in\scb$ which also share this property. So suppose further that $B$ satisfies
            \begin{equation}
            \label{e:Hom-B}
            B(x,A,\lambda)=B(cx, A, c^r\lambda),
            \end{equation}
for all $c>0$.

%

\subsection{Jump the Obstacle} Let $A=1$ in the Jump Inequality \eqref{e:B-J}:
	$$
	B(x,1,\lambda) \geq B(x,0,\lambda-x^r).
	$$
If $\lambda-x^r\leq 0$, the Obstacle Condition says the right hand side above is 1, so:
	\begin{equation}
	\label{e:JumpO1-B}
	B(x,1,\lambda)=1, \text{ for all } \lambda \leq x^r.
	\end{equation}
Then, we can jump again: setting $A=2$ in \eqref{e:B-J} gives that
	$
	B(x,2,\lambda) \geq B(x,1,\lambda-x^r).
	$
From \eqref{e:JumpO1-B}, the right hand side above is 1 whenever $\lambda-x^r \leq x^r$, so
	\begin{equation}
	\label{e:JumpO2-B}
	B(x,2,\lambda)=1, \text{ for all } \lambda \leq 2x^r.
	\end{equation}

\subsection{Reduce to a two-variable problem} Now that we are done with negative values of $\lambda$ (needed to propagate the data from the Obstacle Condition), we can bring in homogeneity. Specifically, for $\lambda>0$, we can let $c=\lambda^{-1/r}$ in \eqref{e:Hom-B} and obtain 
	$
	B(x,A,\lambda)= B\big(x \lambda^{-\sfrac{1}{r}}, \: A, \: 1\big).
	$
Then we write:
	$$
	B(x,A,\lambda) = M(\omega, A), \text{ where } \omega:= x \lambda^{-\sfrac{1}{r}}.
	$$
	
Our new collection of interest, $\scm$, consists then of all functions $M(\omega, A)$ defined via
    $$
    M(\omega, A) := B(\omega, A, 1), \: \text{ for some } B\in\scb.
    $$
A straightforward translation of $\scb$-properties into $\scm$-language yields the following characterization of $\scm$.

\begin{lemma}
    The collection $\scm$ consists of all functions
        $
	M(\omega, A) : \Omega_\mbm \rightarrow [0,1], 
	$
    where
        $$
        \Omega_\mbm := [0,\infty) \times [0,2],
        $$
    which satisfy:

    \begin{enumerate}[label=\textnormal{(\roman*)}, itemsep=0.6em, leftmargin=2.2em]
        \item \textbf{Concavity:} $M$ is concave.
        \item \textbf{Jump Inequality}: For all $\omega \geq 0$ and $A\in [0,1]$:
           \begin{equation}
		\label{e:M-J}
		M\big(\varphi(\omega), A+1\big) \geq M(\omega, A), 
		\end{equation}
	where 
        	\begin{equation}
		    \label{e:varphi-def}
            \varphi_r : [0,\infty) \rightarrow [0,1); \:\:\:\:
            \varphi_r(\omega):=\frac{\omega}{(1+\omega^r)^{\sfrac{1}{r}}}.
		\end{equation}
		
        \item \textbf{Obstacle}:
            \begin{equation}
                \label{e:OC-M}
                M(\omega, 1)=1, \text{ for all } \omega\geq 1, \:\text{ and } \: 
                M(\omega, 2) = 1, \text{ for all } \omega \geq 2^{-\sfrac{1}{r}}.
            \end{equation}
    \end{enumerate}
\end{lemma}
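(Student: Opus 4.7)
The plan is to prove the characterization as an equivalence, with the translation $M(\omega, A) := B(\omega, A, 1)$ bridging the two collections via the homogeneity \eqref{e:Hom-B}. Throughout, I would invoke the preceding lemma to replace \eqref{e:B-MI} by its equivalent split into concavity \eqref{e:B-C} plus the Jump \eqref{e:B-J}, since conditions (i)--(iii) for $M$ are the precise $\lambda = 1$ reflections of these two conditions together with the ``jumped obstacle'' identities \eqref{e:JumpO1-B}--\eqref{e:JumpO2-B}.

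For the forward direction, I would suppose $M(\omega, A) = B(\omega, A, 1)$ for some $B \in \scb$ satisfying \eqref{e:Hom-B}. Concavity of $M$ is immediate from \eqref{e:B-C} at $\lambda = 1$. The two obstacles in \eqref{e:OC-M} fall out of \eqref{e:JumpO1-B} and \eqref{e:JumpO2-B} at $\lambda = 1$: $B(\omega, 1, 1) = 1$ exactly when $\omega^r \geq 1$, and $B(\omega, 2, 1) = 1$ exactly when $2\omega^r \geq 1$. The key calculation is the Jump \eqref{e:M-J}: starting from $B(x, A+1, 1 + x^r) \geq B(x, A, 1)$ and rescaling the left-hand side via $c = (1+x^r)^{-1/r}$ in \eqref{e:Hom-B}, one obtains $M\big(x(1+x^r)^{-1/r}, A+1\big) = M(\varphi_r(x), A+1)$, while the right-hand side is $M(x, A)$. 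Thus $\varphi_r$ is forced by exactly this rescaling.

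For the backward direction, given $M$ satisfying (i)--(iii), I would extend it to
$$B(x, A, \lambda) := \begin{cases} 1, & \lambda \leq 0, \\ M(x \lambda^{-\sfrac{1}{r}}, A), & \lambda > 0, \end{cases}$$
and verify $B \in \scb$ with homogeneity. The homogeneity \eqref{e:Hom-B} and Obstacle \eqref{e:B-OC} are immediate, and $B$ takes values in $[0, 1]$ by construction. Concavity \eqref{e:B-C} of $B$ in $(x, A)$ at each fixed $\lambda > 0$ inherits from concavity of $M$ under the affine slice $(x, A) \mapsto (x\lambda^{-1/r}, A)$; for $\lambda \leq 0$, $B$ is constant. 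For the Jump \eqref{e:B-J} with $\lambda > 0$, the same rescaling used above reverses cleanly into \eqref{e:M-J}.

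The main obstacle is the mixed case $\lambda \leq 0 < \lambda + x^r$ in the Jump for $B$. Here $(\lambda + x^r)^{\sfrac{1}{r}} \leq x$, so $\omega' := x(\lambda + x^r)^{-\sfrac{1}{r}} \geq 1$, and I need $M(\omega', A+1) = 1$ for every $A \in [0, 1]$. The Obstacle \eqref{e:OC-M} directly handles $A + 1 = 1$ (since $\omega' \geq 1$) and $A + 1 = 2$ (since $\omega' \geq 1 \geq 2^{-\sfrac{1}{r}}$). Concavity of the one-variable slice $A \mapsto M(\omega', A)$ together with the upper bound $M \leq 1$ then forces $M(\omega', A+1) = 1$ at every intermediate $A + 1 \in (1, 2)$, so $B(x, A+1, \lambda + x^r) = 1$ and \eqref{e:B-J} holds trivially. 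With \eqref{e:B-C} and \eqref{e:B-J} in hand, the preceding lemma restores \eqref{e:B-MI}, completing the proof.
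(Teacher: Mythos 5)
The paper states this lemma without proof, calling it ``a straightforward translation of $\scb$-properties into $\scm$-language.'' Your proposal correctly supplies that translation in both directions: the forward direction via the homogeneity rescaling $c=(1+x^r)^{-1/r}$ applied to \eqref{e:B-J} at $\lambda=1$ (which is exactly how $\varphi_r$ arises) together with \eqref{e:JumpO1-B}--\eqref{e:JumpO2-B}, and the backward direction by extending $M$ to a homogeneous $B$ and correctly isolating and handling the only nontrivial case $\lambda\leq 0<\lambda+x^r$ via the Obstacle at $A+1\in\{1,2\}$ together with concavity in $A$ and the bound $M\leq 1$. This is exactly the argument the paper intends, just written out.
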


\vspace{0.1in}
\noindent Our goal is now to find a function
	$
	\bfm(\omega, A)=:\bfb(\omega, A, 1)
	$
such that $\bfm\leq M$ for all $M\in\scm$.

\begin{figure}[h]
\begin{center}
\includegraphics[scale=0.25]{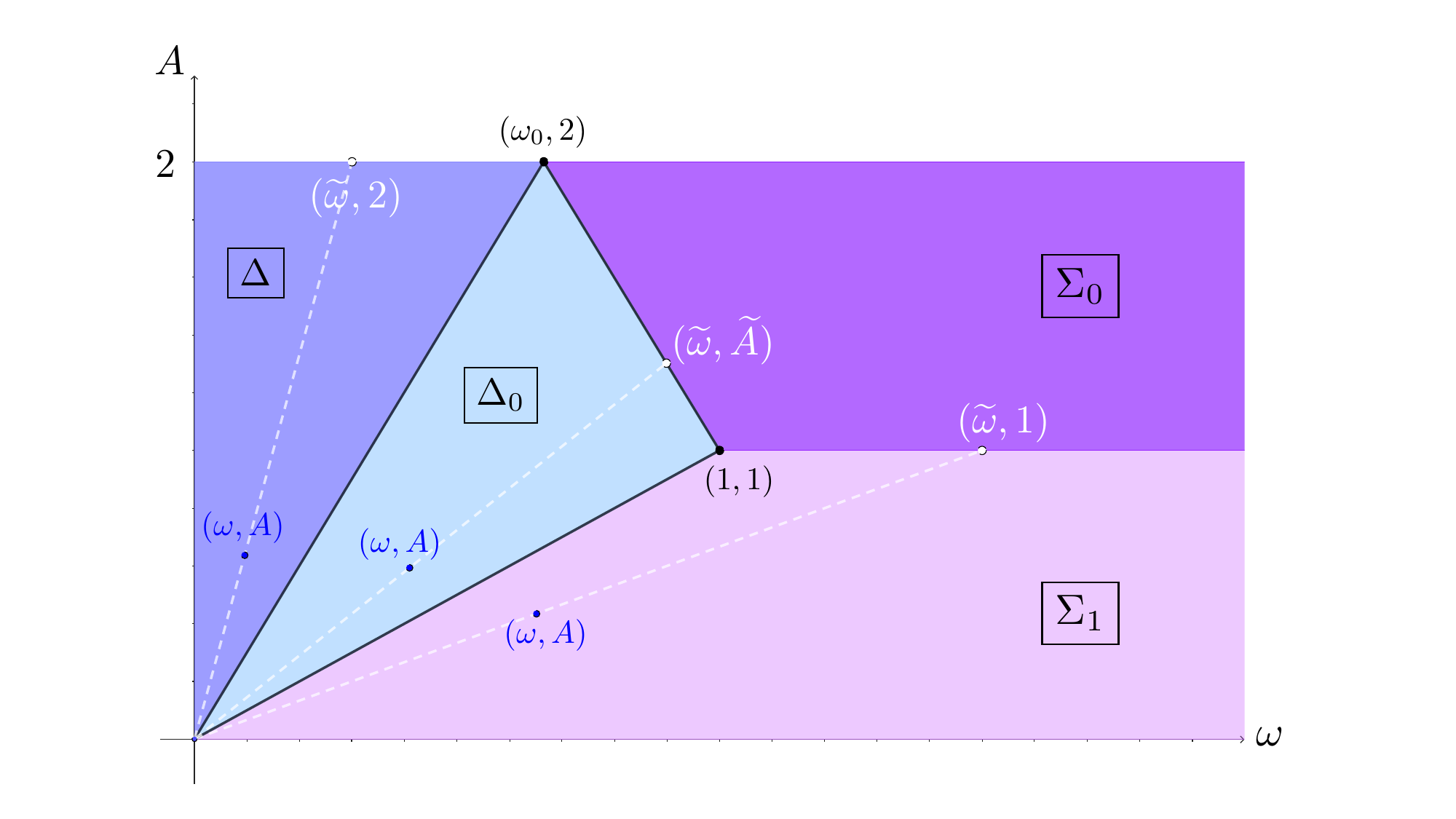}
\end{center}
\caption{Regions in the $M$-domain.}
\label{fig:Mregions}
\end{figure}

The first thing we notice is that $\scm$ functions attain their maximum along the two horizontal rays originating at 
$P_1 = (1,1)$, and at $P_2 = (2^{-\sfrac{1}{r}}, \: 2)$ -- see Figure \ref{fig:Mregions}. But these functions are also \textit{concave}, so  any $M\in\scm$ must be equal to $1$ in the convex hull of these rays. We will denote this (infinite) trapezoidal region by 
    $$
    \Sigma_0 := \left\{(\omega, A)\in\Omega_\mbm: \: \max\left(1, 1+ \frac{1-\omega}{1-\omega_0}\right) \leq A \leq 2\right\},
    $$
where the number
    $$
    \omega_0:=2^{-\sfrac{1}{r}}
    $$
will be fixed going forward. We have
    \begin{equation}
        \label{e:newInfo-0}
    M(\omega, A)=1, \text{ for all } (\omega, A)\in \Sigma_0.
    \end{equation}

 Next, we propagate the new information in \eqref{e:newInfo-0} through the rest of the $M$-domain, using concavity. The shape of $\Sigma_0$ forces us to divide the domain into three more regions: first, another infinite trapezoid 
    $$
    \Sigma_1 := \left\{ (\omega, A) \: :\: 0\leq A < \min(1,\omega)\right\},
    $$
and a triangular region
    $$
    \Delta_0 := \left\{ (\omega, A) \: :\: \omega\leq A\leq \min\bigg(\frac{2\omega}{\omega_0}, \: 1+\frac{1-\omega}{1-\omega_0}\bigg)\right\}.
    $$
In both these regions, we will define  $\bfm$ directly from the values along the boundary of $\Sigma_0$.

This leaves us with the second triangular region
    $$
    \Delta := \left\{ (\omega, A) \: :\: \frac{2\omega}{\omega_0} < A \leq 2 \right\},
    $$
where the main task is to construct $\bfm$ along the remaining $A=2$ boundary.

\vspace{0.1in}

Suppose we have a point $(\omega, A) \in \Sigma_1$.  The ray through the origin and $(\omega, A)$ intersects $A=1$ in the point $(\widetilde{\omega}, 1)$, where $\widetilde{\omega}=\tfrac{\omega}{A}$. To apply concavity along the line segment between $(0,0)$ and $(\widetilde{\omega}, 1)$, we find $\theta\in [0,1]$ such that
    $$
    (\omega, A)=(1-\theta)\cdot (0,0)+\theta \cdot (\widetilde{\omega}, 1),
    $$
so $\theta=A$. By concavity and non-negativity of $M$, and by \eqref{e:newInfo-0}: 
    $$
    M(\omega, A) \geq (1-\theta)\cdot M(0,0)+\theta \cdot M(\widetilde{\omega}, 1) \geq A.
    $$
So we have that:
    $$
    \text{For all } M\in\scm: \: M(\omega, A) \geq A \text{ for all } (\omega, A) \in \Sigma_1.
    $$
We therefore set our candidate $\bfm(\omega,A):=A$ throughout the $\Sigma_1$ region. 

Similarly, the ray through the origin and a point 
$(\omega, A) \in \Delta_0$, intersects the line segment $[P_1; P_2]$ in the point $(\widetilde{\omega}, \widetilde{A})$, and $(\omega, A)=(1-\theta)\cdot (0,0)+\theta \cdot (\widetilde{\omega}, \widetilde{A})$ then holds for $\theta=\omega \widetilde{\omega}^{-1}$. Concavity again yields $M(\omega, A) \geq \theta$, and a straightforward calculation shows that:
    $$
    \text{For all } M\in\scm: \: M(\omega, A) \geq \frac{\omega+(1-\omega_0)A}{2-\omega_0} =: \bfm(\omega, A) \: \text{ for all } (\omega, A) \in \Delta_0.
    $$

Finally, consider a point $(\omega, A) \in \Delta$. An analogous computation -- this time interpolating along line segments between $(0,0)$ and $(\widetilde{\omega}, 2)$ -- yields:
    \begin{equation}
    \label{e:Buildf-C}
    \text{For all } M\in\scm: \: M(\omega, A) \geq \frac{A}{2} \: M\Big(\frac{2\omega}{A}, 2\Big) \: \text{ for all } (\omega, A) \in \Delta.
    \end{equation}
So all we have left to do is define
    $$
    f(\omega):= M(\omega, 2), \:\text{ for }\: 0<\omega< \omega_0.
    $$

Our starting point is $f(\omega_0)=1$.
We will propagate this data in the following way: first, \eqref{e:Buildf-C} tells us that
    $$
    M\left(\frac{\omega}{2}, 1\right)\geq \frac{1}{2} f(\omega), \text{ for all } \omega\in(0,\omega_0),
    $$
i.e. information at level $A=2$ generates data at level $A=1$ via concavity. In turn, information at level $A=1$ can be ``jumped,'' to produce data at level $A=2$:
    $$
    f\left(\varphi\left(\frac{\omega}{2}\right)\right) \geq M\left(\frac{\omega}{2}, 1\right)\geq \frac{1}{2} f(\omega), \text{ for all } \omega\in(0,\omega_0).
    $$
We will see the sequence in \eqref{e:def-omegar-exp} emerging as a result.

\vspace{0.1in}

\begin{minipage}[c]{0.5\linewidth}
Run the initial data at $\omega_0$ through this process:
    $$
    f\left(\varphi\left(\frac{\omega_0}{2}\right)\right) \geq \frac{1}{2} f(\omega_0)=\frac{1}{2},
    $$
or
    $$
    f(\omega_1)\geq \frac{1}{2}, \text{ where } \omega_1:= \varphi\left(\frac{\omega_0}{2}\right).
    $$

Inductively, we obtain a decreasing sequence $\{\omega_n\}_{n\geq 0}$, defined recursively by
$$
      \omega_n:= \varphi\left(\frac{\omega_{n-1}}{2}\right) \text{ for } n\geq 1,
$$
such that $f(\omega_n) \geq 2^{-n}$, for all $n\geq 0$.
\end{minipage}
\begin{minipage}[c]{0.5\linewidth}
\begin{center}
\includegraphics[width=\linewidth]{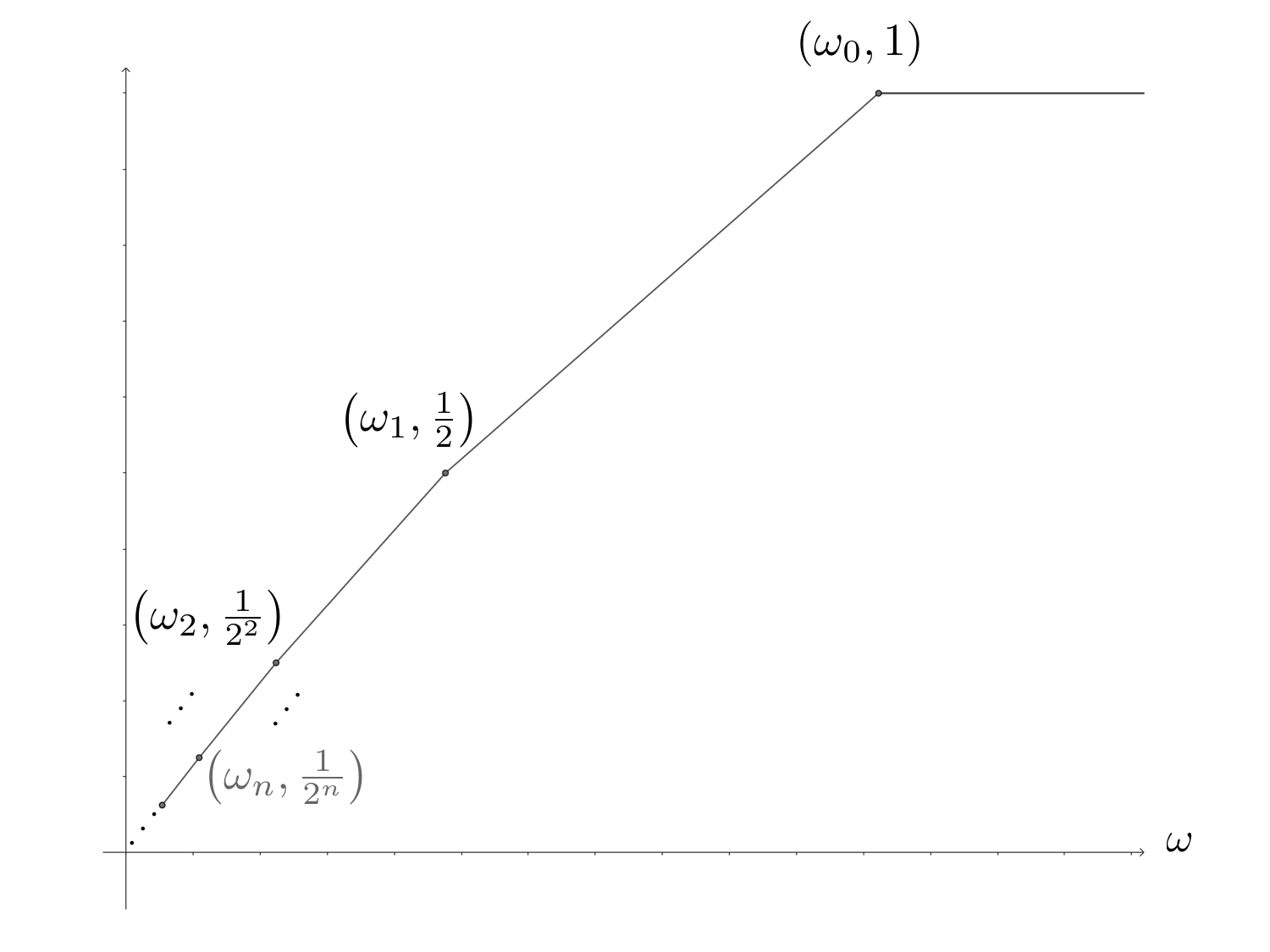}
\end{center}
\end{minipage}


\noindent Then, since $f$ is concave, it must lie above the line segments determined by the points $(\omega_n, 2^{-1/n})$. We denote this by
    $$
    f(\omega) \geq \Lint\bigg[(\omega_0, 1); \: (\omega_1, 2^{-1}); \: \ldots ;\: (\omega_n, 2^{-n}); \ldots\bigg],
    $$
for all $\omega\in(0,\omega_0)$. We therefore arrive at the function defined in \eqref{e:f-def-r}:
    \begin{equation}
        \label{e:f-def}
    \mathfrak{f}: [0,\infty) \rightarrow [0,1]; \:\:\:
    \mathfrak{f}(\omega) := \begin{cases}
        0, & \:\text{ if }\: \omega=0\\
        \Lint[(\omega_n, 2^{-n}); \: n\in \mbz_{\geq 0}], & \:\text{ if }\: 0<\omega<\omega_0\\
        1, & \:\text{ if }\: \omega \geq \omega_0.
    \end{cases}
    \end{equation}
Explicitly, for $0<\omega<\omega_0$:
    \begin{equation}
    \label{e:f-explicit}
        \text{If }\:\: \omega_{n}\leq \omega< \omega_{n-1}, \:\text{ for } n\geq 1, \text{ then: }\:\: \mathfrak{f}(\omega):= \frac{1}{2^n} \left(1+\frac{\omega-\omega_{n}}{\omega_{n-1} -\omega_{n}}\right).
    \end{equation}

Define $\bfm: \Omega_\mbm \rightarrow [0,1]$ by  
\begin{equation}
\label{e:M-def}
	\bfm(\omega,A):=\begin{cases}
	\frac{A}{2}\mathfrak{f}\left(\frac{2\omega}{A}\right), & \text{ if } (\omega, A) \in \Delta;\\[7pt]
        \frac{\omega+(1-\omega_0)A}{2-\omega_0}, & \text{ if } (\omega, A) \in \Delta_0; \\[7pt]
        A, & \text{ if } (\omega, A) \in \Sigma_1; \\[7pt]
        1, & \text{ if } (\omega, A) \in \Sigma_0.
	\end{cases}
\end{equation}

\noindent We have proved:

\begin{theorem}
\label{T:bff-minimizes}
    The function $\bfm$ defined in \eqref{e:M-def} is a minimizer of the collection $\scm$, that is
        $$
        \bfm(\omega, A) \leq M(\omega, A),
        $$
    for all $(\omega,A)\in\Omega_\mbm$ and all $M\in\scm$.
\end{theorem}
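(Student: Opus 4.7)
The plan is to verify $\bfm \leq M$ for an arbitrary $M \in \scm$ by working through the four regions $\Sigma_0, \Sigma_1, \Delta_0, \Delta$ in order of increasing difficulty. The arguments for the first three regions were already sketched during the construction; the proof simply formalizes them as one-step applications of the defining properties of $\scm$. On $\Sigma_0$, the obstacle \eqref{e:OC-M} pins $M = 1$ along the two horizontal rays, and concavity forces $M \equiv 1$ throughout their convex hull, which is exactly $\Sigma_0$. On $\Sigma_1$, for $(\omega, A)$ with $A < \min(1, \omega)$, one writes $(\omega, A) = A \cdot (\omega/A, 1) + (1-A)\cdot(0,0)$; since $\omega/A > 1$ the obstacle gives $M(\omega/A, 1) = 1$, so concavity and non-negativity yield $M(\omega, A) \geq A = \bfm(\omega, A)$. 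On $\Delta_0$, the ray from $(0,0)$ through $(\omega, A)$ meets the segment $[P_1 P_2]$ at a point where $M=1$, and an analogous interpolation produces $M(\omega, A) \geq (\omega + (1-\omega_0)A)/(2-\omega_0)$.

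The substantive work is on $\Delta$. First, concavity along the ray from $(0,0)$ to $(2\omega/A, 2)$, which hits $A = 2$ at parameter $\theta = A/2$, gives
$$
M(\omega, A) \geq \frac{A}{2}\, M\!\left(\frac{2\omega}{A},\, 2\right),
$$
reducing the problem to showing $M(\omega, 2) \geq \mathfrak{f}(\omega)$ for $\omega \in (0, \omega_0)$. I would prove by induction that $M(\omega_n, 2) \geq 2^{-n}$ for all $n \geq 0$: the base case is the obstacle $M(\omega_0, 2) = 1$, and for the inductive step, midpoint concavity on the segment $[(0,0),\, (\omega_{n-1}, 2)]$ gives $M(\omega_{n-1}/2,\, 1) \geq \tfrac{1}{2} M(\omega_{n-1}, 2) \geq 2^{-n}$; applying the Jump Inequality \eqref{e:M-J} at $\omega = \omega_{n-1}/2$ then yields $M(\omega_n, 2) = M(\varphi_r(\omega_{n-1}/2),\, 2) \geq 2^{-n}$, completing the induction. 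Since $M(\cdot, 2)$ is concave (as the restriction of a concave function to the line $A = 2$) and $\omega_n \downarrow 0$ by \eqref{e:def-omegar-exp}, chord-wise concavity between consecutive sample points $\omega_n, \omega_{n-1}$ implies $M(\cdot, 2)$ lies above the piecewise-linear interpolant through $\{(\omega_n, 2^{-n})\}_{n \geq 0}$, which is exactly $\mathfrak{f}$ on $(0, \omega_0)$. Combining this with the concavity step gives $M(\omega, A) \geq \tfrac{A}{2}\mathfrak{f}(2\omega/A) = \bfm(\omega, A)$ throughout $\Delta$.

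The main obstacle is the bookkeeping on $\Delta$: one must confirm that the alternation between concavity (transporting data along rays through the origin, mapping $A=2$ to $A=1$) and the Jump Inequality (transporting from $A=1$ back to $A=2$ via the contraction $\varphi_r$) produces a sequence $\{\omega_n\}$ that is monotone decreasing to $0$, so that its graph truly controls all of $(0, \omega_0)$ and the concavity upgrade from discrete sample points to a pointwise bound on the entire interval is legitimate. The other regions demand only a single, direct concavity interpolation against the obstacle data and so are essentially bookkeeping-free.
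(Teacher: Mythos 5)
Your proposal is correct and follows essentially the same route as the paper: you handle $\Sigma_0$, $\Sigma_1$, $\Delta_0$ by direct concavity interpolation against the obstacle data, reduce the $\Delta$-region to the boundary function $M(\cdot,2)$ via concavity along rays through the origin, and then establish $M(\omega_n,2)\geq 2^{-n}$ by the same concavity-then-jump alternation that the paper uses to generate the sequence $\{\omega_n\}$, finally upgrading to the piecewise-linear lower bound $\mathfrak{f}$ by concavity of the restriction. The only cosmetic difference is that you package the alternation explicitly as an induction and flag the monotone convergence $\omega_n\downarrow 0$ as a point requiring verification, whereas the paper presents the same steps as a forward propagation of data; there is no substantive divergence in method.
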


\section{Proving the candidate $\bfb$ is the true Bellman function.}
\label{S:ProofTrue}


The candidate $\bfm$ satisfies the Obstacle \eqref{e:OC-M} by construction, and since it is the minimum of countably many linear functions, $\bfm$ is concave:

\begin{lemma} 
    The infimum of countably many concave functions $\psi_n:\mbr^d\to\mathbb{R}$ (assumed to be defined on the whole domain) is concave, provided the infimum is finite.
\end{lemma}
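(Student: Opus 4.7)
The plan is to argue directly from the definitions: fix two points $x, y \in \mathbb{R}^d$ and a convex combination parameter $t \in [0,1]$, and set $z := tx + (1-t)y$. We need to show that $\psi(z) \geq t\psi(x) + (1-t)\psi(y)$, where $\psi := \inf_n \psi_n$.

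The key observation is a two-step bound applied to each $\psi_n$ separately. First, since each $\psi_n$ is concave on the whole space, we have
$$\psi_n(z) \geq t\psi_n(x) + (1-t)\psi_n(y).$$
Second, since $\psi(x) = \inf_m \psi_m(x) \leq \psi_n(x)$ and similarly at $y$, and since $t, 1-t \geq 0$, we can replace each $\psi_n$-value on the right-hand side by $\psi$:
$$\psi_n(z) \geq t\psi(x) + (1-t)\psi(y).$$
The finiteness hypothesis on $\psi$ is what makes the last inequality meaningful (so that the right-hand side is not the ill-defined expression $0 \cdot (-\infty)$ or similar). Now take the infimum over $n$ on the left to conclude $\psi(z) \geq t\psi(x) + (1-t)\psi(y)$.

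There is no real obstacle here beyond bookkeeping; the result is elementary and does not actually depend on countability of the index set. The only delicate points worth flagging are (a) the order of operations — we bound each $\psi_n(z)$ individually from below \emph{before} taking the infimum, since the infimum of the convex combination is not readily comparable to the convex combination of infima — and (b) the role of the finiteness hypothesis, which guarantees that $\psi(x)$ and $\psi(y)$ are real numbers so that the final inequality is not vacuous or indeterminate. No countable-choice or measurability issues arise, since we are only using the defining property of infimum pointwise at the two points $x$ and $y$.
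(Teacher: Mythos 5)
Your proof is correct and takes essentially the same route as the paper's: for each fixed $n$, combine concavity of $\psi_n$ with the pointwise bound $\psi \leq \psi_n$ to get $\psi_n(z) \geq t\psi(x) + (1-t)\psi(y)$, then take the infimum over $n$. Your presentation is in fact cleaner (the paper's opening remark about ``reducing to finitely many univariate functions'' is a red herring that its own argument never uses), and your observation that countability is irrelevant is correct.
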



\begin{proof}
    
 Define $\psi:\mbr^d\to\mbr$ by $\psi(y)=\inf\limits_{n\in\mbn}\psi_n(y)$, and let $y_1,y_2\in\mbr^d$. Then, consider the line segment between the points $t\psi(y_1) + (1-t)\psi(y_2),t\in[0,1]$. Observe that by restricting to the convex hull of the two points, the problem is reduced to proving that the infimum of finitely many concave univariate functions $\mbr\to\mbr$ is concave.

Now consider $\psi(ty_1+(1-t)y_2)=\inf\limits_{n}\psi_n(ty_1+(1-t)y_2)$. Because $\psi$ is an infimum, it is sufficient to prove that $t\psi(y_1)+(1-t)\psi(y_2)$ is a lower bound of the quantity $\psi_n(ty_1+(1-t)y_2)$ for each $n$. By definition,

$$
t\psi(y_1) + (1-t)\psi(y_2)=t\inf\limits_{m}\psi_m(y_1) + \inf\limits_{n}\psi_n(y_2)
$$

By properties of an infimum, i.e. because $\psi(y_i)\leq\psi_j(y_i)$ for all $j$. Hence for each $n$,

$$
t\psi(y_1) + (1-t)\psi(y_2)\leq t\psi_n(y_1) + (1-t)\psi_n(y_2)
$$

Then applying concavity of the $\psi_n$'s,

$$
t\psi(y_1) + (1-t)\psi(y_2)\leq \psi_n(ty_1 + (1-t)y_2)
$$

Now using properties of the infimum as described in the second paragraph of the proof, we are done. 



\end{proof}

\begin{center}
  {\Large\ding{167}} 
\end{center}


We are only left to show that $\bfm$ satisfies the Jump Inequality \eqref{e:M-J}. 
We will rely on properties of the $\varphi(\omega) := \varphi_r(\omega)$ function defined in \eqref{e:varphi-def}.
It is easy to see that 
$\varphi$ is strictly increasing, strictly concave, and satisfies 
    $$ \varphi(0)=0; \:\:\:\: \lim_{\omega\rightarrow\infty} \varphi(\omega) = 1;
    \:\:\:\: \varphi(\omega) \leq \omega.$$

\begin{proposition}
        \label{P:Jump1}
The candidate defined in \eqref{e:M-def} satisfies:
    \begin{equation}
        \label{e:Jump-bfm}
    \bfm\big(\varphi(\omega), A+1\big) \geq \bfm(\omega, A),
    \end{equation}
for all $\omega \geq 0$ and $A\in [0,1]$.
\end{proposition}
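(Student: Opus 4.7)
I would prove the jump inequality by case analysis on the regions containing the source $(\omega, A)$ and the target $(\varphi(\omega), A+1)$. Since $A \in [0,1]$ the source lies in $\Sigma_1 \cup \Delta_0 \cup \Delta$, and since $A+1 \geq 1$ with $\varphi(\omega) < 1$ the target lies in $\Sigma_0 \cup \Delta_0 \cup \Delta$. Two observations prune the casework: if the target lands in $\Sigma_0$ then $\bfm(\varphi(\omega), A+1) = 1$ and the inequality is immediate; and if $(\omega, A) \in \Delta$ (so $\omega < A\omega_0/2$), then $\varphi(\omega) < \omega < (A+1)\omega_0/2$ places the target in $\Delta$ as well.

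For source in $\Sigma_1$ or $\Delta_0$ with target in $\Delta_0$, comparison of the two linear pieces reduces to
$$\varphi(\omega) \geq \omega + \omega_0 - 1, \qquad \omega \in [0,1],$$
which follows from $\varphi(0) = 0$, $\varphi(1) = \omega_0$, and the fact that $h(\omega) := \varphi(\omega) - \omega$ is non-increasing (since $\varphi'(\omega) = (1+\omega^r)^{-1-1/r} \leq 1$), giving $h(\omega) \geq h(1) = \omega_0 - 1$. For the mixed cases in which the source is in $\Sigma_1$ or $\Delta_0$ but the target is in $\Delta$, I would bound the target value below using the chord bound $\mathfrak{f}(x) \geq x/\omega_0$ on $[0, \omega_0]$, together with the estimate $\varphi(A) \geq A\omega_0$ (equivalent to $A \leq 1$) and, near the $\Delta/\Delta_0$ boundary where the chord degenerates, the tighter estimate from the first piecewise-linear segment $[\omega_1, \omega_0]$ of $\mathfrak{f}$.

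The main obstacle is the case where both source and target lie in $\Delta$. Using the $1$-homogeneity of $\bfm|_\Delta$ and writing $g(s) := \tfrac{1}{2}\mathfrak{f}(2s)$, $u := \omega/A$, the inequality becomes
$$(A+1)\, g\!\left(\tfrac{\varphi(Au)}{A+1}\right) \geq A\, g(u), \qquad A \in (0,1],\ u \in [0, \omega_0/2).$$
For $A=1$ this is $2g(\varphi(u)/2) \geq g(u)$, equivalently $\mathfrak{f}(\varphi(u)) \geq \mathfrak{f}(2u)/2$. The defining recursion $\omega_n = \varphi(\omega_{n-1}/2)$ together with $\mathfrak{f}(\omega_n) = 2^{-n}$ forces \emph{equality} at $u = \omega_{n-1}/2$ and $u = \omega_n/2$; on the interval $[\omega_n/2, \omega_{n-1}/2]$, $\mathfrak{f}(2u)$ is linear in $u$, while $\mathfrak{f}(\varphi(u))$ is a linear piece of $\mathfrak{f}$ composed with the strictly concave $\varphi$, hence concave in $u$. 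The difference is therefore a concave function vanishing at both endpoints, so it is non-negative throughout.

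To extend from $A=1$ to all $A \in (0,1]$, define $\psi(A) := (A+1)\, g(\varphi(Au)/(A+1))$ for fixed $u$. The perspective $f(s,t) := t\, g(s/t)$ is concave and non-decreasing in each argument (the latter exploiting $g(0) = 0$ and concavity of $g$, which yield $g(s/t') \geq (t/t')\, g(s/t)$ for $t' \geq t$). Since $A \mapsto \varphi(Au)$ is concave (concave $\varphi$ composed with a linear function of $A$) and $A \mapsto A+1$ is affine, the composition $\psi(A) = f(\varphi(Au), A+1)$ is concave on $[0,1]$. With $\psi(0) = g(0) = 0$ and $\psi(1) \geq g(u)$ from the $A=1$ step, concavity yields $\psi(A) \geq A\,\psi(1) + (1-A)\,\psi(0) \geq A\, g(u)$, completing the argument.
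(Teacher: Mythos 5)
Your decomposition is organized by where the source $(\omega,A)$ lives, whereas the paper's is organized by where the target $(\varphi(\omega),A+1)$ lands; the two schemes overlap but are not the same. Your treatment of the $\Delta\to\Delta$ case via the perspective function $f(s,t)=t\,g(s/t)$ is correct and is a genuinely different, somewhat slicker route than the paper's chain of inequalities $\mathfrak{f}\bigl(\tfrac{2\varphi(\omega)}{A+1}\bigr)\geq \mathfrak{f}\bigl(\tfrac{2A}{A+1}\varphi(\tfrac{\omega}{A})\bigr)\geq \tfrac{2A}{A+1}\mathfrak{f}\bigl(\varphi(\tfrac{\omega}{A})\bigr)\geq \tfrac{2A}{A+1}\bfg(\tfrac{\omega}{A})$; both ultimately rest on concavity of $\varphi$, concavity of $\mathfrak{f}$ with $\mathfrak{f}(0)=0$, and the $A=1$ endpoint inequality.

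However, there is a genuine gap in the mixed case ``source in $\Delta_0$, target in $\Delta$.'' The chord bound $\mathfrak{f}(x)\geq x/\omega_0$ gives $\bfm(\varphi(\omega),A+1)\geq \varphi(\omega)/\omega_0$, while $\bfm(\omega,A)=\tfrac{\omega+(1-\omega_0)A}{2-\omega_0}$. On the boundary $A=2\omega/\omega_0$ this source value equals $\omega/\omega_0$, so the chord bound would require $\varphi(\omega)\geq\omega$, which is false. Indeed at, say, $(\omega,A)=(\omega_0/2,1)$ the jump inequality is an \emph{equality} ($1/2=1/2$), so no bound with slack can close it; you acknowledge this (``the chord degenerates'') but your proposed fix -- ``the tighter estimate from the first piecewise-linear segment $[\omega_1,\omega_0]$ of $\mathfrak{f}$'' -- is not spelled out, and since extending a linear piece of a concave $\mathfrak{f}$ gives an \emph{upper} bound rather than a lower bound, it is not clear what inequality you intend to use. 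The paper sidesteps this entirely: it proves the $A=1$ endpoint inequality $\mathfrak{f}(\varphi(\omega))\geq\bfg(\omega)$ for \emph{all} $\omega\geq 0$ (not just $\omega<\omega_0/2$), observes that $\bfm(\omega,A)=A\,\bfg(\omega/A)$ for every source region with $A\leq 1$, and then runs a single argument over all of ``Region 3'' (target in $\Delta$) regardless of where the source lies. If you extend your $A=1$ step to all $u$ (matching the paper's $\bfg$ rather than $g$) and replace $g$ by $\bfg$ in your perspective argument, your cases 4 and 5 become subsumed and the gap disappears. A minor secondary point: your reduction to $\varphi(\omega)\geq\omega+\omega_0-1$ for ``$\omega\in[0,1]$'' silently uses $A<\omega$ from $\Sigma_1$; for $\omega>1$ with source in $\Sigma_1$ one instead needs the trivial $A<1\leq\varphi(\omega)+1-\omega_0$ when $\varphi(\omega)\geq\omega_0$, which you should state.
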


\begin{proof}
Note that \eqref{e:Jump-bfm} holds trivially for $\omega=0$ and $A=0$, so assume below that $\omega >0$ and $A\in (0,1]$.
First, we prove \eqref{e:Jump-bfm} holds at the endpoint $A=1$, namely 
    \begin{equation}
        \label{e:Jump-bfm-1}
        \mathfrak{f}\bigg( \varphi(\omega)\bigg) \geq \bfg(\omega) := \bfm(\omega, 1),
        \:\:\: \text{ for all } \omega \geq 0.
    \end{equation}
Recall that $\mathfrak{f}$ is defined in \eqref{e:f-def}. We include here  for completeness the expression for $\bfg$:
 \begin{equation}
        \label{e:g-def}
    \bfg: [0,\infty) \rightarrow [0,1]; \:\:\:
    \bfg(\omega) := \begin{cases}
        \frac{1}{2}\mathfrak{f}(2\omega), & \:\text{ if }\: 0 \leq \omega < \frac{\omega_0}{2}\\[7pt]
        \frac{\omega+1-\omega_0}{2-\omega_0}, & \:\text{ if }\: \frac{\omega_0}{2} \leq \omega<1\\[7pt]
        1, & \:\text{ if }\: \omega \geq 1.
    \end{cases}
    \end{equation}
Note that, for $\omega <1$, the function $\bfg$ is just linear interpolation between the points 
    $$
    \left(1,1\right); \:\:
    \left(\frac{\omega_0}{2},\frac{1}{2}\right); \:\:
     \left(\frac{\omega_1}{2},\frac{1}{2^2}\right); \:\:
     \ldots
      \left(\frac{\omega_{n-1}}{2},\frac{1}{2^{n}}\right); \:\:
      \ldots
    $$
Moreover, $\varphi$ maps the $\omega$-coordinates of these points exactly to the $\omega$-coordinates of the critical points on the graph of $\mathfrak{f}$:
    $$
    \left(\omega_0, 1\right), \:\:
    \left(\omega_1, \frac{1}{2}\right), \:\:
    \left(\omega_2, \frac{1}{2^2} \right), \:\:
    \ldots
        \left(\omega_n, \frac{1}{2^n} \right), \:\:
    \ldots.
    $$

To prove \eqref{e:Jump-bfm-1}, consider the function:
       $$
        h(\omega) := \mathfrak{f}\big( \: \varphi(\omega) \: \big) - \bfg(\omega); \:\:\:\: \omega>0.
        $$

\begin{itemize}
    \item If $\omega\geq 1$, then $\varphi(\omega) \geq \varphi(1)=\omega_0$, so both sides of  \eqref{e:Jump-bfm-1} are equal to $1$.

    \item For $\frac{\omega_0}{2} \leq \omega \leq 1$, we have $\omega_1 \leq \varphi(\omega) \leq \omega_0$, and from \eqref{e:f-explicit} and \eqref{e:g-def}, $h$ takes the explicit form
        $$
        h(\omega) = \frac{1}{2} \left(1 + \frac{\varphi(\omega) - \omega_1}{\omega_0-\omega_1}\right) - \frac{\omega+1-\omega_0}{2-\omega_0}.
        $$
    Then for all $\omega \in (\omega_0/2, \: 1)$:
        $$
        h''(\omega) = \frac{1}{2(\omega_0-\omega_1)} \varphi''(\omega) <0,
        $$
    so $h$ is concave on this interval. Moreover, $h$ is $0$ at the endpoints:
        $$
        h\left(\frac{\omega_0}{2}\right) = \mathfrak{f}(\omega_1) - \bfg\left(\frac{\omega_0}{2}\right) = \frac{1}{2} - \frac{1}{2} = 0; 
        \:\:\:\:\:\:\:
        h(1) = \mathfrak{f}(\omega_0)-\bfg(1) = 1-1=0,
        $$
    so $h(\omega) \geq 0$ for all $\omega \in [\omega_0/2, \: 1]$.

    \item Finally, let $\omega \in (0, \tfrac{\omega_0}{2})$. Let $n\geq 1$ be such that
        $$
        \frac{\omega_n}{2} \leq \omega < \frac{\omega_{n-1}}{2}.
        $$
    Then
        $$
        h(\omega) = \frac{1}{2^{n+1}}\left(1 + \frac{\varphi(\omega)-\omega_{n+1}}{\omega_n - \omega_{n+1}}\right) -
        \frac{1}{2^{n+1}}\left(1 + \frac{2\omega-\omega_{n}}{\omega_{n-1} - \omega_{n}}\right).
        $$
    Similarly, $h''(\omega) <0$ for all $\omega\in (\omega_n/2, \: \omega_{n-1}/2)$, so $h$ is concave on this interval. Once again, it is $0$ at the endpoints: 
        $$
        h\left(\frac{\omega_n}{2}\right) = h\left(\frac{\omega_n}{2}\right) = 0,
        $$
    so $h\geq 0$ on each interval of the form $[\omega_n/2, \: \omega_{n-1}/2)$, and thus throughout $(0, \omega_0/2)$.
\end{itemize}

Lastly, we show that \eqref{e:Jump-bfm} holds for any $\omega>0$ and  $A\in (0,1)$:
    $\bfm(\: J(\omega, A)\:)\geq \bfm(\omega, A),$
where $J$ denotes the ``jump transformation''
    $$
    J(\omega, A) := (\varphi(\omega), \: A+1).
    $$
As shown in Figure \ref{fig:Jregs}, we split the lower half of the domain into three regions, depending on where in the upper half $J(\omega, A)$ lands.

\begin{figure}[h]
\begin{center}
\includegraphics[scale=0.42]{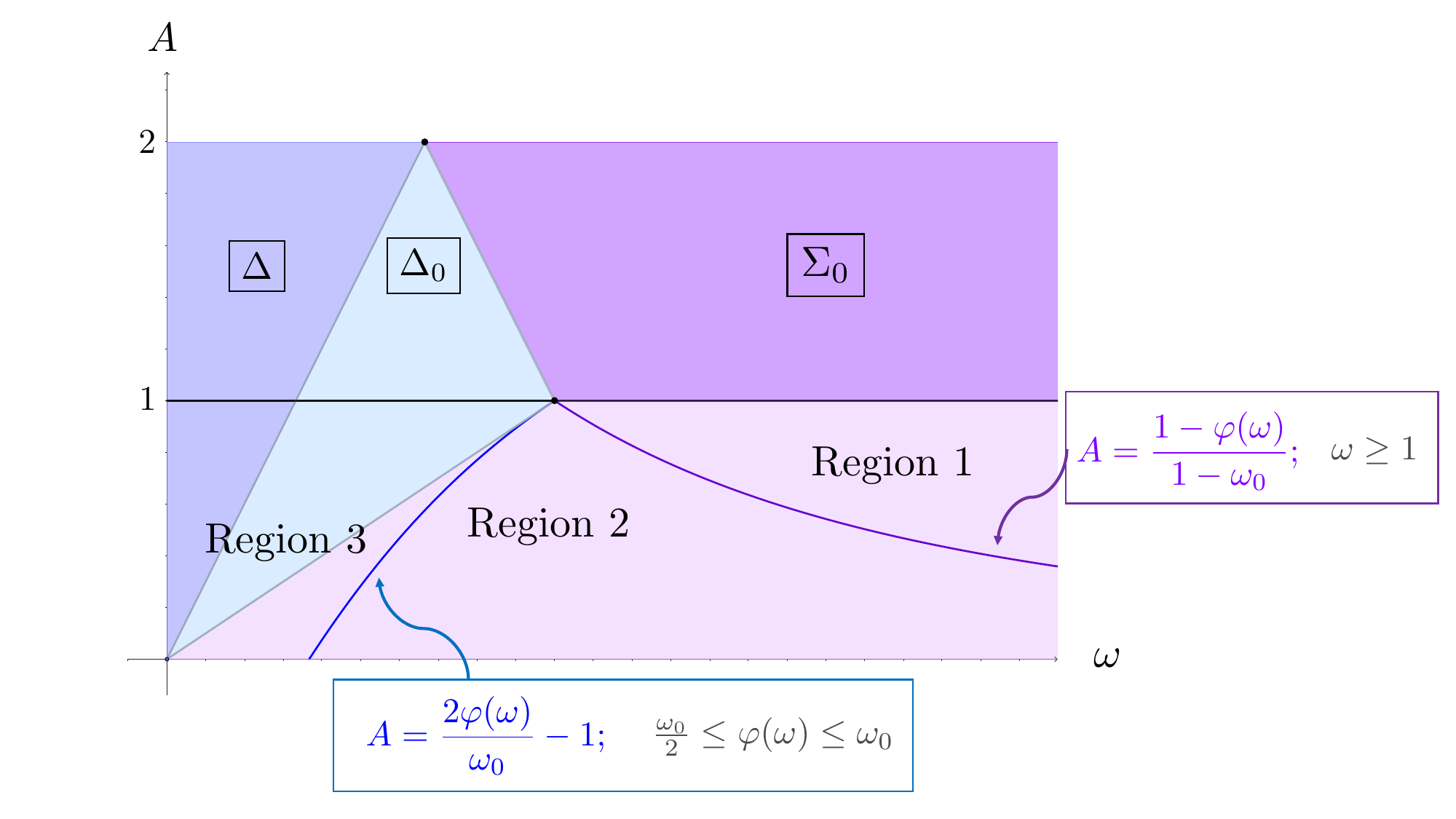}
\end{center}
\caption{Jump regions}
\label{fig:Jregs}
\end{figure}

\vspace{0.2in}
\textbf{Region 1:} 
    $$
    (\omega, A): \:\: \omega>0; \:\: \frac{1-\varphi(\omega)}{1-\omega_0} \leq A <1.
    $$
Then $J(\omega, A)$ lands in $\Sigma_0$ and \eqref{e:Jump-bfm} becomes $1 \geq A$, which is certainly true.

\vspace{0.2in}

\textbf{Region 2:} 
    $$
    (\omega, A): \:\: \omega>0; \:\:
    0< A \leq \min\left(\frac{2\varphi(\omega)}{\omega_0}-1, \: \frac{1-\varphi(\omega)}{1-\omega_0}\right).
    $$
Then $J(\omega, A)$ lands in $\Delta_0$, so
    $$
    \bfm(\: J(\omega, A)\:) = \frac{\varphi(\omega) +(1-\omega_0)(A+1)}{2-\omega_0},
    $$
and \eqref{e:Jump-bfm} becomes equivalent to
    \begin{equation}
        \label{e:temp1}
    A \leq \varphi(\omega) + 1 - \omega_0.
    \end{equation}
But remember that $A$ is bounded by the minimum between two curves.
If $\varphi(\omega) \leq \omega_0$, then
    $$
    \varphi(\omega) + 1 - \omega_0 \geq \frac{2\varphi(\omega)}{\omega_0}-1,
    $$    
otherwise 
    $$
    \varphi(\omega) + 1 - \omega_0 \geq \frac{1-\varphi(\omega)}{1-\omega_0}.
    $$
Either way, \eqref{e:temp1} holds.

\vspace{0.2in}
\textbf{Region 3:} 
    $$
    (\omega, A): \:\: 0 < \omega < \varphi^{-1}\left(\frac{A+1}{2}\omega_0\right); \:\: 0<A<1.
    $$
Then $J(\omega, A)$ lands in $\Delta$, so
    $$
    \bfm(\: J(\omega, A)\:) = \frac{A+1}{2} \cdot \mathfrak{f}\left( \frac{2\varphi(\omega)}{A+1} \right).
    $$
Observe now that we can also express $\bfm(\omega, A)$ in terms of $\bfg$:
    $$
    \bfm(\omega, A) = A \cdot \bfg\left(\frac{\omega}{A}\right).
    $$
Therefore \eqref{e:Jump-bfm} becomes equivalent to
     \begin{equation}
         \label{e:temp2}
         \mathfrak{f}\left( \frac{2\varphi(\omega)}{A+1} \right) \geq \frac{2A}{A+1} \cdot \bfg\left(\frac{\omega}{A}\right).
     \end{equation}

Recall that for a concave function $h:[0,\infty)\rightarrow\mbr$ with $h(0)\geq 0$, we have
    $$
    h(\theta x) \geq \theta \cdot h(x), \text{ for all } \theta \in [0,1].
    $$
Since $\varphi \geq 0$ is concave and $A\in(0,1)$, 
    $$
    \varphi(\omega) = \varphi\left(A \cdot \frac{\omega}{A}\right) \geq A \cdot \varphi\left(\frac{\omega}{A}\right).
    $$
Then, because $\mathfrak{f}$ is non-decreasing,
    \begin{align*}
        \mathfrak{f}\left( \frac{2\varphi(\omega)}{A+1} \right) &\geq 
    \mathfrak{f}\left( \frac{2A}{A+1} \cdot \varphi\left(\frac{\omega}{A}\right)\right)\\
        & \geq \frac{2A}{A+1} \cdot \mathfrak{f}\left(
        \varphi\left(\frac{\omega}{A}\right)\right)\\
        & \geq \frac{2A}{A+1} \cdot \bfg\left(\frac{\omega}{A}\right),
    \end{align*}
where the second inequality follows by concavity of $\mathfrak{f}$ and $\tfrac{2A}{A+1}<1$, and the third inequality follows from \eqref{e:Jump-bfm-1}. This proves \eqref{e:temp2}, and we are done.
\end{proof}

\begin{center}
  {\Large\ding{167}} 
\end{center}
\newpage



\bibliography{bibliography}
\bibliographystyle{abbrv}

\section*{Acknowledgments}
This work was completed during the Spring 2025 semester at the University of Wyoming, as part of the Topics Course Math 5290. I.H.F. is supported by NSF grant NSF-DMS-2246985.

\vspace{0.2in}

\small
\begin{tabular}{ll}
\textbf{Irina Holmes Fay} & University of Wyoming,
\:\: \texttt{iholmesf@uwyo.edu} \\
\textbf{Zachary H. Pence} & University of Wyoming, 
\:\: \texttt{zpence@uwyo.edu} \\
\textbf{John Freeland Small} & University of Wyoming, \:\: \texttt{jsmall6@uwyo.edu} \\
\textbf{Xiaokun Zhou} & University of Wyoming, 
\:\: \texttt{xzhou3@uwyo.edu}
\end{tabular}

\end{document}